\pgfplotsset{compat=newest}
\definecolor{teal}{rgb}{0.0, 0.5, 0.5}
\newcounter{mnotecount}[section]
\newcommand{\rmnote}[1]{}
\DeclareFontFamily{U}{mathb}{\hyphenchar\font45}
\DeclareFontShape{U}{mathb}{m}{n}{
      <5> <6> <7> <8> <9> <10> gen * mathb
      <10.95> mathb10 <12> <14.4> <17.28> <20.74> <24.88> mathb12
      }{}
\DeclareSymbolFont{mathb}{U}{mathb}{m}{n}
\let\dot\relax
\DeclareMathAccent{\dot}{0}{mathb}{"39}
\let\ddot\relax
\DeclareMathAccent{\ddot}{0}{mathb}{"3A}
\let\dddot\relax
\DeclareMathAccent{\dddot}{0}{mathb}{"3B}
\let\ddddot\relax
\DeclareMathAccent{\ddddot}{0}{mathb}{"3C}
\theoremstyle{plain}
\newtheorem*{theorem*}{Theorem}
\newtheorem{theorem}{Theorem}[section]
\newtheorem*{lemma*}{Lemma}
\newtheorem{lemma}[theorem]{Lemma}
\newtheorem*{assumption*}{Assumption}
\newtheorem*{proposition*}{Proposition}
\newtheorem{proposition}[theorem]{Proposition}
\newtheorem*{corollary*}{Corollary}
\newtheorem{corollary}[theorem]{Corollary}
\newtheorem*{claim*}{Claim}
\newtheorem{claim}{Claim}
\newtheorem*{conjecture*}{Conjecture}
\newtheorem*{question*}{Question}
\newtheorem*{result*}{Result}
\theoremstyle{definition}
\newtheorem*{definition*}{Definition}
\newtheorem*{example*}{Example}
\newtheorem*{algorithm*}{Algorithm}
\newtheorem*{remark*}{Remark}
\newtheorem*{remarks*}{Remarks}
\newtheorem{remark}[theorem]{Remark}
\newtheorem*{convention*}{Convention}
\theoremstyle{plain}
\newtheorem{maintheorem}{Theorem}
\numberwithin{equation}{section}
\def\al{\alpha}
\def\la{\lambda}
\def\rh{\rho}
\def\si{\sigma}
\def\vh{\varphi}
\def\om{\omega}
\def\Ga{\Gamma}
\def\N{\mathbb{N}}
\def\R{\mathbb{R}}
\def\cA{\mathcal{A}}
\def\cH{\mathcal{H}}
\def\cM{\mathcal{M}}
\def\cN{\mathcal{N}}
\def\cQ{\mathcal{Q}}
\def\fR{\mathfrak{R}}
\def\sS{\mathscr{S}}
\def\p{\partial}
\def\<{\langle}
\def\>{\rangle}
\def\ol{\overline}
\let\on=\operatorname
\newcommand{\sr}[1]%
{\ifmmode{}^\dagger\else${}^\dagger$\fi\ifvmode
\vbox to 0pt{\vss
 \hbox to 0pt{\hskip\hsize\hskip1em
 \vbox{\hsize3cm\raggedright\pretolerance10000
 \noindent #1\hfill}\hss}\vss}\else
 \vadjust{\vbox to0pt{\vss%
 \hbox to 0pt{\hskip\hsize\hskip1em%
 \vbox{\hsize3cm\raggedright\pretolerance10000%
 \noindent #1\hfill}\hss}\vss}}\fi%
}
\providecommand{\mapsfrom}{\kern.2em%
\setbox0=\hbox{$\leftarrow$\kern-.10em\rule[0.26mm]{0.1mm}{1.3mm}}\box0%
\kern.3em}
\title[Definable Lipschitz selections]
{Definable Lipschitz selections for affine-set valued maps}
\author[Adam Parusi\'nski and  Armin Rainer]
{Adam Parusi\'nski and Armin Rainer}
\address {Adam Parusi\'nski: Universit\'e C\^ote d'Azur,  CNRS,  LJAD, UMR 7351, 06108 Nice, France}
\email{adam.parusinski@univ-cotedazur.fr}
\address{Armin Rainer:\footnote{Current address: Faculty of Mathematics and Geoinformation, Institute for Statistics and Mathematical Methods in Economics, TU Wien, Wiedner Hauptstraße 8, 1040 Austria. Email address: armin.rainer@tuwien.ac.at} 
Fakult\"at f\"ur Mathematik, Universit\"at Wien,
Oskar-Morgenstern-Platz~1, A-1090 Wien, Austria}
\email{armin.rainer@univie.ac.at}
\begin{document}

\begin{abstract}
    Whitney's extension problem, i.e., how one can tell whether a function 
    $f : X \to \R$, $X \subseteq \R^n$, is the restriction of a $C^m$-function on $\R^n$, 
    was solved in full generality by Charles Fefferman in 2006.
    In this paper, we settle the $C^{1,\om}$-case of a related conjecture: 
    given that $f$ is semialgebraic and $\om$ is a semialgebraic modulus of continuity,   
    if $f$ is the restriction of 
    a $C^{1,\om}$-function then it is the restriction of a semialgebraic $C^{1,\om}$-function.
    We work in the more general setting of sets that are definable in an 
    o-minimial expansion of the real field.   
    An ingenious argument of Brudnyi and Shvartsman 
    relates the existence of $C^{1,\om}$-extensions to the existence of Lipschitz 
    selections of certain affine-set valued maps. 
    We show that if a definable affine-set valued map has Lipschitz selections then it also 
    has definable Lipschitz selections. In particular, we obtain a Lipschitz 
    solution (more generally, $\om$-H\"older solution, 
    for any definable modulus of continuity $\om$) of the 
    definable Brenner--Epstein--Hochster--Koll\'ar problem.
    In most of our results we have control over the respective (semi)norms.
\end{abstract}

\thanks{Supported by FWF-Project P 32905-N and Oberwolfach Research Fellows (OWRF) ID 2244p}
\keywords{O-minimal structures, Whitney's extension problem, linear systems, Lipschitz selections}
\subjclass[2020]{
    03C64,  	    
    14P10,      
	26B35,  	
    26E25,      
    32B20,  	
46E15}      
\date{\today}

\maketitle

\section{Introduction}

In this paper, we settle the $C^{1,\om}$-case of an open problem raised by Bierstone and Milman (cf.\ \cite{Zobin:aa}): 
\emph{Given a compact semialgebraic subset $X \subseteq \R^n$ and a semialgebraic function $f : X \to \R$ 
which is the restriction of a $C^m$-function on $\R^n$, does there exist a semialgebraic $C^m$-extension of $f$ to $\R^n$?}

For general $m$, the answer to this question is known to be affirmative in dimension $n \le 2$,
due to Fefferman and Luli \cite{Fefferman:2022aa}. 
In arbitrary dimension, 
Aschenbrenner and Thamrongthanyalak \cite{Aschenbrenner:2019aa} 
proved the $C^1$-version of the statement.
A solution with loss of regularity is due to Bierstone, Campesato, and Milman 
\cite{Bierstone:2021aa}. 

The philosophy of our approach is related to Aschenbrenner and Thamrongthanyalak's  
who prove and use a definable version of Michael's selection theorem. 
``Definable'' means that the sets and maps belong to a fixed o-minimal expansion of the real field
(of which semialgebraic sets are a basic example). Michael's theorem concerns the existence of 
continuous selections of set-valued maps. 
Already the existence of definable Lipschitz selections is in general not known, since the classical results 
are based on transcendental methods (e.g.\ the Steiner point).

Nevertheless, for certain affine-set valued maps, that are relevant for the $C^{1,\om}$-Whitney extension problem, 
Lipschitz selections can be constructed in a way that preserves definability in the given o-minimal structure. 
This construction is due to Brudnyi and Shvartsman \cite{Brudnyi:2001aa}; see also the references therein 
for precursors.

Let us now describe our results in more detail.

\subsection{Main results}
Let an o-minimal expansion of the real field be fixed. 
Throughout the paper, a set $X \subseteq \R^n$ is called \emph{definable} if it is definable in this fixed o-minimal structure. 
A map $\vh : X \to \R^m$ is definable if its graph $\{(x,\vh(x)) : x \in X\}$ is a definable subset of $\R^n \times \R^m$. 
Cf.\ \Cref{ssec:o-minimal}.

By a \emph{modulus of continuity} we always mean a positive, continuous, increasing, and concave function $\om : (0,\infty) \to (0,\infty)$ 
such that $\om(t) \to 0$ as $t \to 0$. A modulus of continuity $\om$ is called \emph{definable} if the function $\om$ is definable.

By definition, $C^{1,\om}(\R^n)$ is the space of all $C^1$-functions $f : \R^n \to \R$ that are globally bounded on $\R^n$ 
and whose partial derivatives of first order are globally bounded and globally $\om$-H\"older on $\R^n$.
Equipped with its natural norm, this space is a Banach space. 
We write $C^{1,\om}_{\on{def}}(\R^n)$ for the subspace of definable functions in $C^{1,\om}(\R^n)$. 
Given a definable subset $X \subseteq \R^n$, 
we denote by $C^{1,\om}(\R^n)|_X$ and $C^{1,\om}_{\on{def}}(\R^n)|_X$ the respective trace spaces on $X$.
Let $\R^X_{\on{def}}$ be the set of all definable functions $f : X \to \R$.
For precise definitions we refer to \Cref{ssec:spaces}.

We will prove the following theorem.

\begin{maintheorem} \label{thm:A}
    Let $\om$ be a definable modulus of continuity. 
    Let $X \subseteq \R^n$ be a closed definable 
    set and $f : X \to \R$ a definable function.
    Then the following conditions are equivalent.
    \begin{enumerate}
        \item $f$ is the restriction of a $C^{1,\om}$-function on $\R^n$.
        \item $f$ is the restriction of a definable $C^{1,\om}$-function on $\R^n$.
    \end{enumerate}
    That means
    \begin{equation} \label{eq:altequiv}
        \R^X_{\on{def}} \cap C^{1,\om}(\R^n)|_X = C^{1,\om}_{\on{def}}(\R^n)|_X.
    \end{equation}
    Moreover, a subset of the set \eqref{eq:altequiv} is bounded in $C^{1,\om}(\R^n)|_X$
    if and only if it is bounded in $C^{1,\om}_{\on{def}}(\R^n)|_X$.

    In the Lipschitz case $\om(t) = t$, for compact definable $X$ we even have  
    \begin{equation} \label{eq:equivnorms}
        \|f\|_{C^{1,1}_{\on{def}}(\R^n)|_X} \approx \|f\|_{C^{1,1}(\R^n)|_X}.
    \end{equation}
\end{maintheorem}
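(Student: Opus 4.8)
The plan is to reduce the $C^{1,\om}$-extension problem to a Lipschitz selection problem for definable affine-set valued maps, following the Brudnyi--Shvartsman correspondence, and then invoke the definable selection machinery announced in the abstract. First I would recall the Brudnyi--Shvartsman reformulation: for a closed set $X \subseteq \R^n$ and $f : X \to \R$, the existence of a $C^{1,\om}$-extension is equivalent to the existence of a Lipschitz selection (with respect to a suitable metric built from $\om$ on the bundle of affine subspaces) of the set-valued map sending $x \in X$ to the affine subset $\Gamma(x) \subseteq \R^n$ consisting of all admissible ``candidate gradients'' — that is, those $\xi \in \R^n$ for which the first-order Taylor data $(f(x),\xi)$ at $x$ is compatible, up to the $\om$-modulus, with the values of $f$ near $x$. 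The crucial observation is that when $X$ and $f$ are definable and $\om$ is a definable modulus of continuity, this map $x \mapsto \Gamma(x)$ is a \emph{definable} affine-set valued map, because $\Gamma(x)$ is cut out by a family of inequalities involving only $f$, $\om$, and polynomial data, all of which live in the fixed o-minimal structure.

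Next I would apply the main technical result of the paper — that a definable affine-set valued map admitting Lipschitz selections admits \emph{definable} Lipschitz selections, with control of the Lipschitz seminorm — to produce a definable selection $x \mapsto \xi(x)$. Feeding the definable Taylor data $(f,\xi)$ back through the ($\om$-)Whitney extension construction yields a $C^{1,\om}$-function $F$ on $\R^n$ with $F|_X = f$; the point is that the Whitney extension operator (via a definable partition of unity adapted to a Whitney cube decomposition of $\R^n \setminus X$, which exists in the o-minimal setting) sends definable data to a definable function. This establishes the inclusion $\R^X_{\on{def}} \cap C^{1,\om}(\R^n)|_X \subseteq C^{1,\om}_{\on{def}}(\R^n)|_X$; the reverse inclusion is trivial, giving \eqref{eq:altequiv}. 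The boundedness statement follows the same route, tracking the norm bounds through the selection theorem (which is quantitative) and through the Whitney operator, whose operator norm is controlled independently of the data. For the sharper equivalence \eqref{eq:equivnorms} in the Lipschitz case $\om(t)=t$ with $X$ compact, one uses that in that case the selection can be taken with Lipschitz seminorm comparable to the optimal one, and the extension operator is bounded, so the two trace norms are comparable with absolute constants.

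The main obstacle, and the heart of the paper, is the definable Lipschitz selection step: classical constructions of Lipschitz selections of affine-set valued maps (Steiner point, centers of mass, Helly-type metric projections) are transcendental or at least not obviously definable, so one cannot simply quote them. One must instead produce a selection by a genuinely definable procedure — presumably an inductive argument on the dimension of the affine fibers, combining definable cell decomposition, definable choice/curve selection, and a careful definable version of the Brudnyi--Shvartsman finiteness (Helly-type) principle — while simultaneously keeping the Lipschitz constant under control. Verifying that the various pieces (the definability of $\Gamma$, the definable Whitney partition of unity, the quantitative bounds) fit together is largely routine o-minimal bookkeeping, but the selection theorem itself requires real new ideas; everything else in \Cref{thm:A} is a packaging of that result together with the Brudnyi--Shvartsman dictionary.
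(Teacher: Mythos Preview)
Your high-level strategy---reduce to a Lipschitz selection problem via Brudnyi--Shvartsman, apply the definable selection theorem, then pass through a definable Whitney extension---matches the paper. But your description of the affine-set valued map is not the one that works, and this is not a cosmetic issue. The map you describe, sending a single point $x\in X$ to the set $\Gamma(x)$ of ``admissible candidate gradients'' compatible with the values of $f$ near $x$, is cut out by \emph{inequalities} and is therefore a convex body, not an affine subspace; \Cref{thm:B} applies only to $\cA_k(\R^n)$-valued maps. The correct Brudnyi--Shvartsman construction lives on the space of \emph{pairs} $\cM_X=\{(x,y)\in X\times X:x\ne y\}$ (with a metric built from $\om$), and sends $(x,y)$ to the genuine hyperplane $L_f(x,y)=\{z\in\R^n:\langle z,x-y\rangle=f(x)-f(y)\}\in\cA_{n-1}(\R^n)$. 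A Lipschitz selection of $L_f$ then yields, by a limiting procedure, a definable gradient field $g$ making $(f,g)$ a definable Whitney jet. The paper also needs a weighted-graph refinement of \Cref{thm:B} (adding an auxiliary vertex $*$ to absorb boundedness and equipping $\widetilde\cM_X$ with a non-trivial edge structure) rather than \Cref{thm:B} itself; this is why the stronger \Cref{thm:BS23def} is formulated.

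Your account of the norm control is also off in a way that matters. You assert that the definable Whitney extension operator has controlled operator norm; the paper explicitly does \emph{not} claim this for general $\om$---its definable $C^{m,\om}$ Whitney extension theorem only says that bounded families of jets go to bounded families of extensions, which suffices for the bounded-subset statement but not for \eqref{eq:equivnormsom}. The reason \eqref{eq:equivnorms} holds in the $C^{1,1}$ case is not any special feature of the selection step (the selection bound \eqref{eq:thmB} is already uniform for all $\om$), but that in the Lipschitz case one can bypass the Whitney machinery entirely and use the explicit Azagra--Le~Gruyer--Mudarra formula, which is visibly definable and comes with an explicit norm bound.
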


In particular, \eqref{eq:altequiv} holds for all classical H\"older classes with rational H\"older exponent, 
since $\om(t) = t^\al$ with rational $\al \in (0,1]$ is semialgebraic.
We conjecture that
\begin{equation} \label{eq:equivnormsom}
    \|f\|_{C^{1,\om}_{\on{def}}(\R^n)|_X} \approx \|f\|_{C^{1,\om}(\R^n)|_X}
\end{equation}
for \emph{any} definable modulus of continuity $\om$.   

The meaning of ``$\approx$'' in \eqref{eq:equivnorms} and \eqref{eq:equivnormsom} is 
that either quotient of the two sides lies in the interval $[C^{-1},C]$ 
for some constant $C\ge1$ depending only on $n$.

The proof of \Cref{thm:A} is based on an ingenious argument of Brudnyi and Shvartsman \cite{Brudnyi:2001aa}
that relates the existence of $C^{1,\om}$-extensions to the existence of Lipschitz selections for 
certain maps which take affine subspaces of $\R^n$ as values. 

The next result, \Cref{thm:B}, states that, if these maps happen to be definable and admit a Lipschitz selection, then 
they have a definable Lipschitz selection.

We denote by $\cA_k(\R^n)$ the set of all affine subspaces 
of $\R^n$ of dimension at most $k$. A pseudometric space $(\cM,\rh)$ is said to be definable if $\cM$ is a definable subset 
of some $\R^N$ and the pseudometric $\rh : \cM \times \cM \to [0, \infty)$ is a definable function. 
A map $F : \cM \to \cA_k(\R^n)$ is called definable if its graph $\bigcup_{m \in \cM} (\{m\} \times F(m))$ is a 
definable subset of $\R^N \times \R^n$. 
A \emph{selection} of $F$ is a map $f : \cM \to \R^n$ such that $f(m) \in F(m)$ for all $m \in \cM$.
Cf.\ \Cref{ssec:pseudometric} and \Cref{ssec:setvalued}.

\begin{maintheorem} \label{thm:B}
    Let $(\cM,\rh)$ be a definable pseudometric space and $F : \cM \to \cA_k(\R^n)$ a definable map.   
    Then the following conditions are equivalent.
    \begin{enumerate}
        \item $F$ has a Lipschitz selection.
        \item $F$ has a definable Lipschitz selection.    
    \end{enumerate}
    Moreover, if there is a Lipschitz selection $f$ of $F$ with Lipschitz seminorm $|f|_{\on{Lip}(\cM,\R^n)}$, then 
    there is a definable Lipschitz selection $g$ of $F$ with
    \begin{equation} \label{eq:thmB}
        |g|_{\on{Lip}(\cM,\R^n)} \le C(k,n)\, |f|_{\on{Lip}(\cM,\R^n)},
    \end{equation}
    where $C(k,n)>0$ is a constant that only depends on $k$ and $n$.
\end{maintheorem}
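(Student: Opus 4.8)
The plan is to prove the nontrivial implication $(1)\Rightarrow(2)$ together with the seminorm estimate \eqref{eq:thmB}, by induction on the dimension bound $k$. The base case $k=0$ is essentially trivial: a map $F:\cM\to\cA_0(\R^n)$ assigns to each $m$ a single point $F(m)=\{f(m)\}$, so $F$ has a unique selection $f$, which is automatically definable (its graph is the graph of $F$) and Lipschitz with the same seminorm. For the inductive step, suppose the statement holds for $\cA_{k-1}(\R^n)$ and let $F:\cM\to\cA_k(\R^n)$ be definable with a Lipschitz selection $f$, $L:=|f|_{\on{Lip}(\cM,\R^n)}$. The idea is to replace $f$ by a \emph{canonically constructed}, hence definable, selection that stays within a controlled multiple of $L$.

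The construction I would use is the classical one that drives the Brudnyi--Shvartsman argument and underlies the whole paper: take a point $m_0\in\cM$ (on each definable connected component; by o-minimality there are finitely many, and one can treat them separately since $\rh$-distances between components may be handled by the general Lipschitz-selection theory or, if $\rh$ is finite, included directly), fix $p_0:=f(m_0)\in F(m_0)$, and define $g(m)$ to be the \emph{metric projection} of $p_0$ onto the affine subspace $F(m)$, i.e.\ the unique nearest point of $F(m)$ to $p_0$. Since $F(m)$ is affine and nonempty, this nearest-point map is single-valued, and its graph is defined by the first-order formula ``$x\in F(m)$ and $|x-p_0|\le|y-p_0|$ for all $y\in F(m)$'', so $g$ is definable. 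The key metric fact is that nearest-point projection onto an affine subspace is $1$-Lipschitz in the subspace in the following sense: if $A,B$ are affine subspaces of $\R^n$ with Hausdorff-type distance $d$ (measured, say, as $\max$ of the one-sided deviations on a large ball, or via the gap between the associated linear subspaces plus the distance between base points), then the projections of a fixed point $p_0$ onto $A$ and onto $B$ satisfy $|\pi_A(p_0)-\pi_B(p_0)|\le c(n)\,(d + \mathrm{dist}(p_0,A))$. Combined with the hypothesis that $F$ admits a Lipschitz selection $f$ — which forces $F(m)$ and $F(m')$ to be close whenever $\rh(m,m')$ is small, quantitatively $\mathrm{dist}_H\big(F(m)\cap B, F(m')\cap B\big)\lesssim L\,\rh(m,m')$ on balls of radius comparable to $L\,\mathrm{diam}$ — this yields $|g(m)-g(m')|\le C(n)\,L\,\rh(m,m')$ on each component.

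The main obstacle, and where the induction is genuinely needed rather than a one-step projection argument, is \emph{uniformity of the constant and the passage between components / the globalization}: the naive projection onto $F(m)$ from a single base point $p_0$ controls $g$ only when $F(m)$ stays within bounded distance of $p_0$, which is guaranteed near $m_0$ but must be propagated over all of $\cM$. The clean way to handle this is to first use Theorem~A's underlying selection machinery (or directly the hypothesis) to know a Lipschitz selection \emph{exists}, pick any such $f$, and then project $f(m)$ — not a fixed $p_0$ — onto a lower-dimensional definable ``core'' of $F$: concretely, intersect $F(m)$ with a suitable definable affine subspace of complementary type to cut the dimension down to $k-1$ while retaining a point close to $f(m)$, producing a definable $F':\cM\to\cA_{k-1}(\R^n)$ that still has a Lipschitz selection with seminorm $\le C(n)L$; apply the inductive hypothesis to $F'$ to get a definable Lipschitz selection $g$ of $F'$, and note $g$ is a selection of $F$ since $F'(m)\subseteq F(m)$. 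The delicate points to verify carefully are (i) that such a dimension-reducing definable slicing can be done \emph{uniformly in $m$} — here one uses that $F$ itself is definable, so the family $\{F(m)\}$ is a definable family of affine subspaces and a definable choice of complementary slice exists by definable choice in o-minimal structures (cf.\ the o-minimal preliminaries) — and (ii) that the slicing does not destroy the Lipschitz property, i.e.\ that it can be arranged to vary Lipschitz-continuously with $m$ with a dimensional constant. Tracking the multiplicative constant through $k$ steps gives the asserted $C(k,n)$. Finally, the equivalence $(1)\Leftrightarrow(2)$ follows since $(2)\Rightarrow(1)$ is immediate, a definable Lipschitz selection being in particular a Lipschitz selection.
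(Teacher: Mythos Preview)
Your proposal has a genuine gap at the heart of the inductive step. You correctly identify that projecting a fixed point $p_0$ onto $F(m)$ fails globally, and then propose to ``intersect $F(m)$ with a suitable definable affine subspace of complementary type'' to produce $F':\cM\to\cA_{k-1}(\R^n)$. But the two ``delicate points'' you defer---that the slice can be chosen (i) definably and (ii) varying Lipschitz-continuously in $m$---are not technicalities; they are the entire content of the theorem. Definable choice gives you (i) for free, but it gives no continuity whatsoever, let alone Lipschitz continuity. A definable family of hyperplanes transverse to $F(m)$ can jump discontinuously, and there is no evident canonical choice that inherits the Lipschitz bound from the mere existence of $f$. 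Your sketch provides no mechanism for (ii), and without it the induction does not close.

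The paper's construction (following Brudnyi--Shvartsman) is substantially more elaborate and does not proceed by slicing each $F(m)$ individually. Instead one \emph{doubles} the space: for each pair $v_1,v_2$ one forms the intersection of $F(v_1)$ with a thickening of $F(v_2)$, represents this convex set as an intersection of slabs around affine spaces $L_i\in\cA_{k-1}(\R^n)$, and assembles the $L_i$ into a new definable map $\ol F$ on a doubled space $(\ol V,\ol\rho)$ whose pseudometric incorporates the slab radii. The finiteness principle (Lipschitz selections on $2^{k+1}$-point subsets) is what verifies the induction hypothesis for $\ol F$. After obtaining a definable Lipschitz selection $\ol f$ of $\ol F$, one passes through a cube-valued map $\hat F$ and uses an explicit inf-formula (Lemma~\ref{lem:BS26def}) to get a definable Lipschitz selection $\hat f$ that descends to $V$; only then is orthogonal projection onto $F(v)$ applied. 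The doubling and the cube-valued step are exactly what supply the Lipschitz control that your slicing argument lacks.
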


To deduce \Cref{thm:A} from \Cref{thm:B} we actually need a slightly stronger version of the latter which is 
stated in \Cref{thm:BS23def}.

Another application of \Cref{thm:B} is an $\om$-H\"older (in particular, Lipschitz) 
solution of the definable Brenner--Epstein--Hochster--Koll\'ar problem.

\begin{maintheorem} \label{thm:C}
    Let $\om$ be a definable modulus of continuity. 
    Let $A_{ij},b_i : X \to \R$, for $i=1,\ldots,N$ and $j=1,\ldots,M$, be definable functions defined on a definable 
    subset $X \subseteq \R^n$.
    Consider the linear system of equations
    \begin{equation} \label{eq:system}
        \sum_{j=1}^M A_{ij} f_j = b_i, \quad i = 1,\ldots,N,
    \end{equation}
    in the unkowns $f_j$, $j = 1,\ldots,M$.
    Then the following conditions are equivalent.
    \begin{enumerate}
        \item  The system \eqref{eq:system} admits an $\om$-H\"older solution. 
        \item  The system \eqref{eq:system} admits a definable $\om$-H\"older solution. 
    \end{enumerate}
    Moreover, if there is an $\om$-H\"older solution $f=(f_1,\ldots,f_M)$ of \eqref{eq:system} with $\om$-H\"older seminorm 
    $|f|_{C^{0,\om}(X,\R^M)}$,
    then there is a definable $\om$-H\"older solution $g=(g_1,\ldots,g_M)$ of \eqref{eq:system}  such that 
    \begin{equation} \label{eq:thmC}
        |g|_{C^{0,\om}(X,\R^M)} \le C(M) \, |f|_{C^{0,\om}(X,\R^M)}.
    \end{equation}
\end{maintheorem}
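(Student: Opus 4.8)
The plan is to derive \Cref{thm:C} from \Cref{thm:B} (or rather its strengthened form \Cref{thm:BS23def}) by encoding the solution set of the linear system \eqref{eq:system} at each point as an affine subspace of a fixed Euclidean space. Concretely, for each $x \in X$ let
\[
    F(x) := \{ f \in \R^M : \textstyle\sum_{j=1}^M A_{ij}(x) f_j = b_i,\ i=1,\ldots,N \}.
\]
Since the $A_{ij}$ and $b_i$ are definable, the graph $\bigcup_{x \in X}(\{x\}\times F(x))$ is a definable subset of $\R^n \times \R^M$; and whenever $F(x)\neq\emptyset$ it is an affine subspace of $\R^M$ of dimension at most $M$, so $F$ is a definable map into $\cA_M(\R^M)$. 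The equivalence of (1) and (2) and the seminorm bound \eqref{eq:thmC} will then be an immediate application of \Cref{thm:B} with $k=n$ replaced by $M$ and $\R^n$ replaced by $\R^M$, once the metric issue below is settled.

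The one genuine subtlety is that \Cref{thm:B} is stated for a \emph{pseudometric} space $(\cM,\rh)$, whereas the hypothesis of \Cref{thm:C} only gives an $\om$-H\"older solution with respect to the \emph{Euclidean} metric on $X$ — and an $\om$-H\"older map is exactly a Lipschitz map after reparametrizing the metric by $\om$. So first I would set $\cM := \{x \in X : F(x)\neq\emptyset\}$ (a definable subset of $\R^n$, being the domain of definition of a definable condition) and equip it with the pseudometric $\rh(x,y) := \om(|x-y|)$ when $x\neq y$ and $\rh(x,x):=0$. One checks $\rh$ is genuinely a metric (actually: $\om$ concave, increasing, positive, vanishing at $0$, hence $\om\circ|\cdot-\cdot|$ satisfies the triangle inequality), it is a definable function on $\cM\times\cM$ since $\om$ is a definable modulus of continuity, and that a map $g:\cM\to\R^M$ is Lipschitz for $\rh$ precisely when it is $\om$-H\"older for the Euclidean metric, with equal seminorms, i.e.\ $|g|_{\on{Lip}(\cM,\rh;\R^M)} = |g|_{C^{0,\om}(\cM,\R^M)}$.

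With this setup in place the argument is: an $\om$-H\"older solution $f=(f_1,\ldots,f_M)$ of \eqref{eq:system} restricts to a Lipschitz selection of $F:(\cM,\rh)\to\cA_M(\R^M)$ (points $x\in X\setminus\cM$ are vacuous, since there the system has no solution at all, so $X=\cM$ whenever any solution exists); \Cref{thm:B} then produces a definable Lipschitz selection $g$ with $|g|_{\on{Lip}(\cM,\R^M)} \le C(M)\,|f|_{\on{Lip}(\cM,\R^M)}$; unwinding the reparametrization, $g$ is a definable $\om$-H\"older map on $X$ with $g(x)\in F(x)$ for all $x$, i.e.\ a definable $\om$-H\"older solution of \eqref{eq:system}, and $|g|_{C^{0,\om}(X,\R^M)}\le C(M)\,|f|_{C^{0,\om}(X,\R^M)}$, which is \eqref{eq:thmC}. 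The implication (2)$\Rightarrow$(1) is trivial. The main obstacle is really just bookkeeping: verifying that $\rh$ is a definable pseudometric and that the Lipschitz-versus-$\om$-H\"older dictionary preserves seminorms exactly (so that the constant in \eqref{eq:thmC} depends only on $M$, with no stray dependence on $\om$ or $n$), together with checking that $F$ really does land in $\cA_M(\R^M)$ — none of which is deep, but all of which must be stated carefully for the constant to come out as claimed.
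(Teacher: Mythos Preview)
Your proposal is correct and is essentially the same argument as the paper's: define $F(x)$ as the solution set of the linear system at $x$, observe it is a definable map into $\cA_M(\R^M)$ once a solution exists, and apply \Cref{thm:B} to $(\cM,\rh)=(X,\om(\|\cdot-\cdot\|))$, so that Lipschitz selections for $\rh$ are exactly $\om$-H\"older solutions and the seminorm bound \eqref{eq:thmC} follows with constant $C(M)=C(M,M)$. Your extra care in verifying that $\rh$ is a definable metric and that the Lipschitz/$\om$-H\"older dictionary preserves seminorms is welcome but not a departure from the paper's approach.
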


A $C^0$-version of this theorem is due to Aschenbrenner and Thamrongthanyalak \cite{Aschenbrenner:2019aa};
for the semialgebraic setting see also Fefferman and Koll\'ar \cite{FeffermanKollar13}.

Let us emphasize that, due to \cite{Brudnyi:2001aa}, the respective first conditions in \Cref{thm:A} and \Cref{thm:B} 
are characterized by \emph{finiteness principles}:
\begin{itemize}
    \item An arbitrary function $f : X \to \R$, $X \subseteq \R^n$, belongs to
        $C^{1,\om}(\R^n)|_X$ if and only if for each subset $Y \subseteq X$ of cardinality $\# Y \le 3 \cdot 2^{n-1}$ 
        there is $F_Y \in C^{1,\om}(\R^n)$ such that $F_Y = f$ on $Y$ and $\sup_{Y}\|F_Y\|_{C^{1,\om}(\R^n)} < \infty$.
    \item An arbitrary map $F : \cM \to \cA_k(\R^n)$, where $\cM$ is any pseudometric space,
        admits a Lipschitz selection if and only if 
        for each subset $\cN \subseteq \cM$  of cardinality $\# \cN \le  2^{k+1}$  
        the restriction $F|_\cN$ has a Lipschitz selection $f_\cN$ such that $\sup_{\cN} |f_\cN|_{\on{Lip}(\cN,\R^n)} <\infty$.
\end{itemize}
As shown in \cite{Brudnyi:2001aa}, the cardinalities of the finite test sets cannot be reduced.
Obviously, we immediately get a corresponding finiteness principle for \Cref{thm:C}.

The proofs of the main results, \Cref{thm:A} and \Cref{thm:B}, are essentially a 
careful verification that the constructions of \cite{Brudnyi:2001aa} are done in a definable way.
We first prove \Cref{thm:B} in \Cref{sec:selection} and then deduce \Cref{thm:A} from it 
in \Cref{sec:extension}. \Cref{thm:C} is a simple consequence of \Cref{thm:B}; see \Cref{sec:proofC}.

For the transition from a definable Whitney jet of class $C^{1,\om}$ on $X$ to a definable $C^{1,\om}$-function on $\R^n$,
we use our recent paper \cite{ParusinskiRainer:2023aa} on the uniform extension of definable Whitney jets of class $C^{m,\om}$ 
(which is a variation of the definable $C^m$ Whitney extension theorem \cite{Kurdyka:1997ab, Kurdyka:2014aa, Thamrongthanyalak:2017aa}).
At this stage, we cannot control the norms in this extension 
and thus are not able to prove \eqref{eq:equivnormsom}, but we prove that the extension can be done in 
a bounded way; see also \Cref{ssec:furtherremarks}.
Alternatively, in the $C^{1,1}$-case one can use 
the explicit formula due to Azaga, Le Gruyer, and Mudarra \cite{Azagra:2018aa}
which is visibly definable and admits a control of the norms which leads to \eqref{eq:equivnorms}.
The explicit formula also yields an immediate proof of the definable Kirszbraun theorem; see \Cref{ssec:Kirszbraun}.

\subsection*{Notation}

We equip $\R^n$ with the maximum-norm $\|x\| := \max_{1 \le i \le n} |x_i|$. 
If a different norm is used, e.g., the Euclidean norm $\|x\|_2 := (\sum_{1 \le i \le n} x_i^2)^{1/2}$, 
then it will be explicitly stated. 
Note that $\|x\| \le \|x\|_2 \le \sqrt n\, \|x\|$. 
The closed $\| \cdot \|$-balls $Q(x,r) := \{y \in \R^n : \|x-y\|\le r\}$ are cubes with sides parallel to the 
coordinate axes. We also write $Q(r):= Q(0,r)$ so that $Q(x,r) = x+Q(r)$.
If $\la>0$ then $\la Q(x,r)$ denotes the cube $x + Q(\la r)$.
The standard scalar product in $\R^n$ is denoted by $\langle x,y \rangle := \sum_{i=1}^n x_iy_i$.

\section{Preliminaries} \label{sec:prelim}

In this section, we recall definitions and fix notation.

\subsection{O-minimal expansions of the real field} \label{ssec:o-minimal}

An \emph{o-minimal expansion of the ordered field of real numbers} 
is a family $\sS = (\sS_n)_{n\ge 1}$, where $\sS_n$ is a collection of subsets of $\R^n$ such that 
\begin{itemize}
    \item $\sS_n$ is a boolean algebra with respect to the usual set-theoretic operations,
    \item $\sS_n$ contains all semialgebraic subsets of $\R^n$,
    \item $\sS$ is stable by cartesian products and linear projections,
    \item each $S \in \sS_1$ has only  finitely many connected components.
\end{itemize}
A set $S$ that belongs to $\sS$ is said to be \emph{definable} (in $\sS$).
A map $f : S \to \R^m$, $S\subseteq \R^n$, is called \emph{definable} if its graph 
$\{(x,f(x)): x \in S\}$ is a definable subset of $\R^n \times \R^m$. 

The basic example of an o-minimal expansion of the real field is the family of semialgebraic sets. 
Another important example is the family of globally subanalytic sets. 
Many more interesting o-minimal structures have been identified in recent decades.
We refer to \cite{vandenDries98} and \cite{vandenDriesMiller96} for the fundamentals of the theory. 

From now on, the attribute ``definable'' will refer to a fixed o-minimal expansion of the real field.

\subsection{Spaces of differentiable functions} \label{ssec:spaces}
Let $\om$ be a modulus of continuity, 
i.e., a positive, continuous, increasing, and concave function $\om : (0,\infty) \to (0,\infty)$ 
such that $\om(t) \to 0$ as $t \to 0$.
Let $C^{0,\om}(\R^n)$ be the set of all continuous bounded functions $f : \R^n \to \R$ such that 
\[
    |f|_{C^{0,\om}(\R^n)} := \inf\{C>0 : |f(x)-f(y)| \le C\, \om(\|x-y\|) \text{ for all }x,y \in\R^n\} < \infty.
\]
For $m$ a nonnegative integer, $C^{m,\om}(\R^n)$ consists of all $C^m$-functions such that $\p^\al f$ is 
globally bounded on $\R^n$, for all $|\al|\le m$, and $\p^\al f \in C^{0,\om}(\R^n)$, for all $|\al|=m$.
Then $C^{m,\om}(\R^n)$ is a Banach space with the norm
\begin{equation} \label{eq:Cmomnorm}
    \|f\|_{C^{m,\om}(\R^n)} := \sup_{x \in \R^n} \sup_{|\al|\le m} |\p^\al f(x)| + \sup_{|\al|= m}|\p^\al f|_{C^{0,\om}(\R^n)}.
\end{equation}
Let $C^{m,\om}_{\on{def}}(\R^n)$ be the subspace consisting of the functions in  $C^{m,\om}(\R^n)$ that are definable.

Let $X \subseteq \R^n$ be a subset.
The trace space $C^{m,\om}(\R^n)|_X$ is the set of all $f : X \to \R$ such that there exists $F \in C^{m,\om}(\R^n)$ with $F|_X =f$.
It carries the norm
\[
    \|f\|_{C^{m,\om}(\R^n)|_X} := \inf \{\|F\|_{C^{m,\om}(\R^n)} : F \in C^{m,\om}(\R^n),\, F|_X = f\}.
\]
Similarly, we consider the space $C^{m,\om}_{\on{def}}(\R^n)|_X$ of restrictions to $X$ of functions in $C^{m,\om}_{\on{def}}(\R^n)$
with the norm
\[
    \|f\|_{C^{m,\om}_{\on{def}}(\R^n)|_X} := \inf \{\|F\|_{C^{m,\om}(\R^n)} : F \in C^{m,\om}_{\on{def}}(\R^n),\, F|_X = f\}.
\]
If $X$ is definable, then each element of $C^{m,\om}_{\on{def}}(\R^n)|_X$
is a definable function on $X$.

\subsection{Pseudometric spaces} \label{ssec:pseudometric}
By a \emph{pseudometric space} $(\cM,\rh)$ we mean a non-empty set $\cM$ together with a nonnegative 
real valued function $\rh : \cM \times \cM \to [0,\infty)$ 
such that $\rh(x,x) = 0$, $\rh(x,y) = \rh(y,x)$, and $\rh(x,y) \le \rh(x,z) + \rh(z,y)$ for all $x,y,z \in \cM$.
The function $\rh$ is called a \emph{pseudometric};
if additionally $\rh(x,y)=0$ implies $x=y$, it is called a \emph{metric}.
We say that $(\cM,\rh)$ is an \emph{extended pseudometric space} if the pseudometric $\rh$ may also take the value $+\infty$.   

A pseudometric space $(\cM,\rh)$ is called \emph{definable} if $\cM$ is a definable subset of $\R^N$, for some $N$,
and $\rh : \cM \times \cM \to [0,\infty)$ is a definable function.

A map $f : \cM \to \R^n$ is \emph{Lipschitz} if 
\[
    |f|_{\on{Lip}(\cM,\R^n)} := \inf \{C>0 : \|f(x) - f(y)\| \le C \, \rh(x,y) \text{ for all } x,y \in \cM\} <\infty.
\]
Let $\om$ be a modulus of continuity.
A map $f : X \to \R^n$ with $X \subseteq \R^N$ is \emph{$\om$-H\"older} if 
\[
    |f|_{C^{0,\om}(X,\R^n)} := |f|_{\on{Lip}(X_\om,\R^n)} < \infty,
\]
where $X_\om$ is the space $X$ endowed with the metric $\rh(x,y) := \om(\|x-y\|)$.

\subsection{Set-valued mappings} \label{ssec:setvalued}

A \emph{set-valued mapping} is a map $F : X \to 2^Y$. The \emph{graph} of $F$ is the subset $\bigcup_{x \in X} (\{x\} \times F(x))$ of $X \times Y$.
A \emph{selection} of $F$ is a map $f : X \to Y$ such that $f(x) \in F(x)$ for all $x \in X$. 

If $X\subseteq \R^N$ is definable and $Y=\R^n$, then we say that a set-valued map $F : X \to 2^{\R^n}$ is \emph{definable} if 
its graph is a definable subset of $\R^N \times \R^n$.

\section{Definable Lipschitz selections} \label{sec:selection}

The purpose of this section is to formulate and prove \Cref{thm:BS23def} 
from which \Cref{thm:B} and \Cref{thm:C} will follow easily.

\subsection{Definable Lipschitz selections for cube-valued maps}

Let $\cQ(\R^n)$ be the family of all closed cubes $Q(x,r) := \{y \in \R^n : \|x-y\| \le r\}$, where $r \in [0,\infty]$; 
thus $Q(x,0) = \{x\}$ and $Q(r,\infty)=\R^n$ count as cubes and belong to $\cQ(\R^n)$.  
Cubes centered at the origin will be denoted by $Q(r) := Q(0,r)$. 

\begin{lemma} \label{lem:BS26def}
    Let $(\cM,\rh)$ be a definable pseudometric space. 
    Let $F : \cM \to \cQ(\R^n)$ be a definable map. 
    Assume that for every $2$-point subset $\cN \subseteq \cM$ there exists a Lipschitz selection $f_\cN : \cN \to \R^n$ 
    of $F|_\cN$ with $|f_\cN|_{\on{Lip}(\cN,\R^n)}\le 1$.
    Then there exists a definable Lipschitz selection $f : \cM \to \R^n$ 
    of $F$ with $|f|_{\on{Lip}(\cM,\R^n)}\le 1$.
\end{lemma}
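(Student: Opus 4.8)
The plan is to reduce the $n$-dimensional statement to the one-dimensional case by working coordinate by coordinate, and to handle the one-dimensional case by an explicit "clamping" formula that is manifestly definable. Write $F(m) = Q(c(m), r(m)) = \prod_{i=1}^n [c_i(m) - r(m), c_i(m) + r(m)]$, where $c : \cM \to \R^n$ and $r : \cM \to [0,\infty]$ are definable (the center and radius can be recovered definably from the graph of $F$, e.g.\ $c_i(m)$ is the midpoint of the $i$-th projection of $F(m)$ and $r(m)$ is its half-width). The hypothesis that every $2$-point restriction has a $1$-Lipschitz selection means precisely that for all $m, m' \in \cM$ the cubes $F(m)$ and $F(m')$ come within $\|\cdot\|$-distance $\rh(m,m')$ of each other; concretely, $|c_i(m) - c_i(m')| \le r(m) + r(m') + \rh(m,m')$ for every $i$. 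The idea is to produce, for each coordinate $i$ separately, a definable $1$-Lipschitz function $f_i : \cM \to \R$ with $f_i(m) \in [c_i(m) - r(m), c_i(m) + r(m)]$; then $f = (f_1,\ldots,f_n)$ is the desired selection, and it is $1$-Lipschitz for the maximum-norm precisely because each coordinate is.

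First I would fix a basepoint $m_0 \in \cM$ (using that $\cM$ is non-empty) and, for a fixed coordinate $i$, choose a definable point $a_i(m_0) \in F(m_0)_i$, e.g.\ $a_i(m_0) := c_i(m_0)$ if $r(m_0) < \infty$, or $a_i(m_0) := 0$ otherwise. Then define
\[
    f_i(m) := \operatorname{med}\bigl(c_i(m) - r(m),\; a_i(m_0),\; c_i(m) + r(m)\bigr),
\]
the median of the three values, which is the nearest point of the interval $F(m)_i$ to the fixed number $a_i(m_0)$ (interpreted correctly when $r(m) = \infty$, where it is just $a_i(m_0)$, or $r(m) = 0$, where it is $c_i(m)$). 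This is a definable function of $m$ since $c_i, r$ are definable and the median is a definable operation. Clearly $f_i(m) \in F(m)_i$ by construction. It remains to check $|f_i(m) - f_i(m')| \le \rh(m,m')$: this is a short case analysis using that the nearest-point projection onto a closed interval is $1$-Lipschitz in the interval's endpoints, combined with the inequality $|c_i(m) - c_i(m')| \le r(m) + r(m') + \rh(m,m')$ coming from the $2$-point hypothesis. More precisely, one shows that the closest points of $F(m)_i$ and $F(m')_i$ to a common external anchor differ by at most the "gap" between the intervals, which is at most $\rh(m,m')$.

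The main obstacle I anticipate is the bookkeeping around the infinite and degenerate cubes: when $r(m) = \infty$ the interval is all of $\R$ and $f_i(m) = a_i(m_0)$, so one must make sure the Lipschitz estimate still goes through (it does, trivially, since the anchor is constant, but the $2$-point hypothesis has to be read correctly in this case — if one cube is all of $\R$ then any pair of selections works). A secondary technical point is verifying that $c_i$ and $r$ really are definable functions of $m$ and not merely definable as relations; this follows from o-minimal cell decomposition applied to the definable graph of $F$, or simply from the fact that the infimum and supremum of the (definable, bounded-below or bounded-above when finite) fiber $F(m)_i$ are definable. Once these points are dispatched, assembling $f = (f_1, \ldots, f_n)$ and reading off $|f|_{\on{Lip}(\cM,\R^n)} \le 1$ from the maximum-norm definition is immediate, so the argument is essentially an exercise in making the obvious "project onto the nearest point" construction respect definability.
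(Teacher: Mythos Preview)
Your reduction to $n=1$ by coordinate projection is fine, and so is the observation that the $2$-point hypothesis is equivalent to $|c_i(m)-c_i(m')|\le r(m)+r(m')+\rh(m,m')$ for each $i$. But the core construction is wrong: projecting onto the nearest point of $F(m)_i$ from a \emph{fixed} anchor does \emph{not} produce a $1$-Lipschitz selection.

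Here is a concrete counterexample in dimension $1$. Take $\cM=\{m,m'\}$ with $\rh(m,m')=1$, $F(m)=[0,10]$, $F(m')=[5,6]$. The $2$-point hypothesis holds (e.g.\ the constant $5$ is a $0$-Lipschitz selection). With anchor $a=0$ your formula gives $f(m)=0$ and $f(m')=5$, so $|f(m)-f(m')|=5>1=\rh(m,m')$. The sentence ``the closest points of $F(m)_i$ and $F(m')_i$ to a common external anchor differ by at most the gap between the intervals'' is simply false: when the anchor lies inside one interval (or to the same side of both but the intervals have very different widths), the two nearest points can be far apart even though the intervals overlap. The nearest-point map is $1$-Lipschitz in the \emph{argument}, not in the \emph{interval}.

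What is missing is that the selection at $m$ must take into account \emph{all} the other intervals, not just a single basepoint. One way to do this (the paper's way) is, after reducing to bounded intervals $F(m)=[a(m),b(m)]$, to set
\[
f(m):=\inf_{m'\in\cM}\bigl(b(m')+\rh(m,m')\bigr),
\]
which is visibly definable, satisfies $a(m)\le f(m)\le b(m)$ by the $2$-point hypothesis, and is $1$-Lipschitz by the usual inf-convolution argument. The unbounded intervals are handled separately by first solving on $\cM_0:=\{m:r(m)<\infty\}$ and then extending by the McShane formula $f(m):=\inf_{m'\in\cM_0}(f_0(m')+\rh(m,m'))$.
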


\begin{proof}
    Cf.\ \cite[Proposition 1.24 and Corollary 1.25]{BrudnyiBrudnyi12Vol1}.  
    By projection to the coordinate axes of $\R^n$ it is enough to consider the case $n = 1$.
    Then $F(m)$ is a closed interval $[a(m),b(m)]$ (either bounded, possibly a point, or $\R$).

    Let us first show that we may assume that all $F(m)$ are bounded.  
    Consider the definable set $\cM_0 := \{ m \in \cM : F(m) \text{ is bounded} \} = b^{-1}(\R)$
    and the definable map $F_0 := F|_{\cM_0}$.
    We may assume that $\cM_0$ is non-empty, since otherwise 
    $F(m) = \R$ for all $m \in \cM$ and $f:= 0$ is the required Lipschitz selection.
    If we prove that $F_0 : \cM_0 \to \cQ(\R)$ has a definable Lipschitz selection $f_0 : \cM_0 \to \R$ 
    with $|f_0|_{\on{Lip}(\cM_0,\R)} \le 1$, 
    then $f_0$ admits a definable Lipschitz extension $f : \cM \to \R$ with 
    $|f|_{\on{Lip}(\cM,\R)} \le 1$ defined by 
    \begin{equation} \label{eq:defLipext}
        f(m) := \inf_{m' \in \cM_0} (f_0(m') + \rh(m,m')), \quad m \in \cM,
    \end{equation}
    which clearly is the required selection of $F$ (since $F(m)=\R$ if $m \not\in \cM_0$).
    Let us check that $f$ in \eqref{eq:defLipext} has the desired properties. 
    It is finite, because 
    \[
        f_0(m_0) - \rh(m,m_0) \le f(m) \le f_0(m_0) + \rh(m,m_0)
    \]
    for any $m_0 \in \cM_0$, 
    where the first inequality holds, since $|f_0|_{\on{Lip}(\cM_0,\R)} \le 1$ entails
    \[
        f_0(x) + \rh(x,m) \ge f_0(m_0) - \rh(x,m_0) + \rh(x,m) \ge f_0(m_0) - \rh (m,m_0), 
    \]
    for all $x \in \cM_0$.
    We have $f|_{\cM_0} = f_0$, since 
    if $m \in \cM_0$ then 
    \[
        f(m) \le f_0(m) \le f_0(m') + \rh(m,m')  
    \]
    for all $m' \in \cM_0$, whence $f(m) = f_0(m)$.
    To see $|f|_{\on{Lip}(\cM,\R)} \le 1$ let $m_1, m_2 \in \cM$. Then
    \[
        f(m_1) \le  \inf_{m' \in \cM_0} (f_0(m') + \rh(m_2,m') + \rh(m_1,m_2)) = f(m_2) + \rh(m_1,m_2),
    \]
    and we are done.

    Hence, it suffices to prove the lemma under the assumption that $F(m)$ is bounded for all $m \in \cM$.
    We will show that 
    \[
        f(m) := \inf_{m' \in \cM} (b(m') + \rh(m,m')) = \inf_{m' \in \cM} (\sup F(m') + \rh(m,m')), \quad m \in \cM,
    \]
    is a definable Lipschitz selection of $F$ with $|f|_{\on{Lip(\cM,\R)}} \le 1$; in particular, $f$ is finite.
    It is immediate from the definition that $f$ is definable and satisfies $f(m) \le b(m)$ for every $m \in \cM$.
    To see $a(m) \le f(m)$ let $m' \in \cM$ be arbitrary. By assumption, there exists $g:=f_{\{m,m'\}} : \{m,m'\} \to \R$ 
    with $a(m) \le g(m) \le b(m)$, $a(m') \le g(m') \le b(m')$, and $|g(m)-g(m')| \le \rh(m,m')$, 
    so that  $a(m) \le b(m') + \rh(m,m')$. 

    Finally, for any $m_1,m_2 \in \cM$, 
    \[
        f(m_1) \le \inf_{m' \in \cM} (b(m') + \rh(m_2,m')+\rh(m_1,m_2)) = f(m_2) + \rh(m_1,m_2),
    \]
    and we conclude that $|f|_{\on{Lip}(\cM,\R)}\le 1$.
\end{proof}

\subsection{Definable Lipschitz selections for affine-set valued maps}

Let $V \subseteq \R^N$ be a definable set.
Let $\Ga = (V,E)$ be a graph with set of vertices $V$ 
and set of edges $E \subseteq \{(v,v') \in V \times V : v \ne v'\}$. 
We assume that $E$ is symmetric in the sense that $(v,v') \in E$ implies $(v',v) \in E$.
In other words, 
the graph is undirected and, strictly speaking, 
the set of edges actually is $E/\!\sim$, where $(v,v') \sim (v',v)$, but this 
inaccuracy in notation will cause no troubles.
Furthermore, we assume that $E$ is definable.
Let us write $v \leftrightarrow v'$ if $v, v' \in V$ are joined by an edge, i.e., 
$(v,v') \in E$. 

We say that a subset $W \subseteq V$ is \emph{admissible} if $(W,E_W)$ regarded as a subgraph of $\Ga=(V,E)$,
where $(v,v') \in E_W$ if and only if $(v,v') \in E$ and $v,v' \in W$,    
has no isolated vertices. This means that each vertex is joined by an edge to another vertex unless 
the graph consists of but one vertex.

Let us endow the graph $\Ga = (V,E)$ with a \emph{weight}, i.e., a symmetric function 
$w : E \to [0,\infty]$.  
This induces an extended pseudometric space $(V,\si)$, 
where $\si : V \times V \to [0,\infty]$ is defined
by
\begin{equation} \label{eq:definitionrho}
    \si(v,v') := \inf \sum_{i=0}^{k} w(e_i), \quad v \ne v',
\end{equation}
where the infimum is taken over all finite paths $\{e_i\}_{i=0}^k$, $k \in \N$, of edges in $\Ga$ joining $v$ and $v'$. 
Moreover,
$\si(v,v) := 0$ and $\si(v,v') :=+\infty$ if there is no path of edges joining $v\ne v'$. 

We say that $\Ga = (V,E,w)$ is a \emph{definable weighted graph} if the sets $V$ and $E$ are definable as specified above 
and, 
additionally, 
there exist a definable pseudometric $\rh : V \times V \to [0,\infty)$ on $V$ and an absolute constant $A \ge 1$ such that
\begin{equation} \label{eq:dwgraph}
    \frac{1}{A} \rh \le \si \le A \rh.
\end{equation}
So $(V,\si)$ is a pseudometric space, while $(V,\rh)$ is even a definable pseudometric space.  
In particular, in a definable weighted graph any two vertices are joined by a path of edges 
and $V$ is an admissible subset of $V$.

\begin{remark} \label{rem:dpm}
    Let $(\cM,\rh)$ be a definable pseudometric space.
    The full graph with set of vertices $\cM$, set of edges $\{(m,m') \in \cM \times \cM : m \ne m'\}$,
    and weight $\rh$ is a definable weighted graph (where $\si = \rh$).
    Furthermore, any subset of $\cM$ is admissible.
\end{remark}

Recall that $\cA_k(\R^n)$ is the family of affine subspaces of $\R^n$ of dimension at most $k$
and that a map $F : V \to \cA_k(\R^n)$ is called definable if its graph 
is a definable subset of $\R^N \times \R^n$. 
For a map $f : W \to \R^n$, $W \subseteq V$, we set 
\[
    |f|_{\on{Lip}(W,\R^n)} := \inf \{C>0 : \|f(v) - f(v')\| \le C \, \rh(v,v') \text{ for all } v,v' \in W\};
\]
i.e., the Lipschitz seminorm is computed in terms of $\rh$ (not $\si$).

\begin{theorem} \label{thm:BS23def}
    Let $\Ga = (V,E,w)$ be a definable weighted graph.
    Let $F : V \to \cA_k(\R^n)$ be a definable map.
    Assume that for each admissible subset $W \subseteq V$ of cardinality $\# W\le 2^{k+1}$ 
    there exists a Lipschitz selection $f_W : W \to \R^n$ of $F|_W$ 
    with $|f_W|_{\on{Lip}(W,\R^n)} \le 1$. 
    Then there exists a definable Lipschitz selection $f : V \to \R^n$ of $F$ with 
    $|f|_{\on{Lip}(V,\R^n)} \le C$, where $C\ge 1$ is a constant depending only on $k$, $n$, and 
    $A$ from \eqref{eq:dwgraph}.
\end{theorem}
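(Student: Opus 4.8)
The plan is to reduce \Cref{thm:BS23def} to the cube-valued case \Cref{lem:BS26def} by induction on the dimension bound $k$, following the Brudnyi–Shvartsman scheme while tracking definability at each step. For $k=0$ the map $F$ takes single points as values, so $f:=F$ is itself a definable selection and the finiteness hypothesis on admissible $2$-point sets gives $|f|_{\on{Lip}(V,\R^n)}\le 1$ directly (up to the comparison constant $A$ between $\si$ and $\rh$). For the inductive step, assume the statement holds for affine-set valued maps with values in $\cA_{k-1}(\R^n)$. The key geometric device is, for each $v\in V$, to replace the affine subspace $F(v)$ of dimension $\le k$ by a suitable \emph{cube} (a ball in the $\|\cdot\|$-norm) $Q(v)\subseteq F(v)$ of controlled radius, chosen so that: (i) any Lipschitz selection of the cube-valued map $v\mapsto Q(v)$ is a Lipschitz selection of $F$; and (ii) the hypothesis on admissible sets of size $\le 2^{k+1}$ transfers to a hypothesis on $2$-point sets for the cube-valued map, up to a universal factor. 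The radius of $Q(v)$ is governed by a \emph{Helly-type} quantity: roughly, $r(v)$ is the supremum over finite admissible subsets $W\ni v$ of the radius of the smallest cube inside $F(v)$ that is ``reachable'' by $1$-Lipschitz selections on $W$. The point of the dyadic bound $2^{k+1}$ is exactly that this supremum is attained (up to a constant) on subsets of size $\le 2^{k+1}$, via Helly's theorem in $\R^k$.

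Concretely, the steps I would carry out are as follows. First, formalize the reachable-radius function $r:V\to[0,\infty]$ and the associated ``center'' — but since centers need not be unique or definable, instead work with the set-valued map $G(v):=\{x\in F(v): Q(x,r(v))\subseteq F(v)\text{ and }x\text{ is consistent with all size-}\le 2^{k+1}\text{ data}\}$. Show $G(v)$ is non-empty (this is where the finiteness hypothesis and Helly enter) and that $v\mapsto G(v)$ is again an affine-set valued map of dimension strictly less than $k$ \emph{or} is cube-valued — the dichotomy being whether $r(v)$ is ``as large as possible'' inside $F(v)$ or not. Second, verify definability: $r(v)$ is defined by a first-order formula over the definable data $(V,\rh,\mathrm{graph}(F))$ with bounded quantifiers over tuples of size $\le 2^{k+1}$, hence $r$ is a definable function, and consequently $G$ has definable graph. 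Third, split $V$ into the definable pieces $V_1:=\{v: \dim G(v)<k\}$ and $V_2:=\{v:\dim G(v)=k\}$; on $V_2$ one is in the cube-valued situation and applies \Cref{lem:BS26def} (with the $2$-point hypothesis obtained from the $2^{k+1}$-point hypothesis via Helly), on $V_1$ one applies the inductive hypothesis to $G|_{V_1}$; then glue the two partial selections by a Lipschitz-extension argument, exactly as in the unbounded-cube reduction inside the proof of \Cref{lem:BS26def} (formula \eqref{eq:defLipext}), using that $G(v)\supseteq$ a large cube whenever $v\notin V_1$. Finally, collect the multiplicative constants: each dimension-reduction step costs a factor depending only on $n$ (from Helly and the cube-inscription), the base case costs $A$, and the gluing costs a universal factor, so the total is $C(k,n,A)$ as claimed.

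The main obstacle I anticipate is the precise construction of the dimension-reducing set-valued map $G$ together with the verification that its values are themselves affine subspaces (not merely convex sets) of dimension $\le k-1$ on $V_1$. In Brudnyi–Shvartsman this is handled by a careful analysis of the geometry of the ``largest inscribed cube'' inside an affine subspace relative to finitely many Lipschitz constraints: the set of admissible centers of maximal inscribed cubes is an affine subspace of one lower dimension unless the constraints are vacuous. Transcribing this into the definable category is not automatic — one must ensure that the auxiliary optimization (maximize the inscribed cube radius subject to finitely many affine inequalities coming from $F$ on nearby admissible sets) is expressed by a bounded-quantifier formula so that o-minimality applies; the bound $2^{k+1}$ on the number of constraints needed is precisely Helly's theorem for cubes in $\R^k$ and is what makes the quantifier bounded. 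A secondary technical point is keeping the Lipschitz estimates uniform through the gluing: one needs that on the overlap the inductively-obtained selection on $V_1$ and the \Cref{lem:BS26def}-selection on $V_2$ can be merged into one $C$-Lipschitz map, which follows from a standard $\inf$-convolution argument as in \eqref{eq:defLipext} because each piece's values lie in $F(v)$ and $F(v)$ contains a cube of radius comparable to $\si$-distances, but the bookkeeping of constants must be done with care.
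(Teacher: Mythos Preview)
Your proposal has a genuine structural gap: the dimension-reducing map $G:V\to\cA_{k-1}(\R^n)$ that you describe cannot be made to work as a map on the \emph{same} vertex set $V$. The ``set of admissible centers of maximal inscribed cubes subject to the $2^{k+1}$ constraints'' is in general only a convex set, not an affine subspace, and there is no reason its dimension should drop uniformly. More fundamentally, the information that forces dimension reduction is \emph{edge-dependent}: the constraint on $F(v)$ coming from a neighbor $v'$ depends on $v'$, and different neighbors cut $F(v)$ by different slabs. You cannot collapse all of this into a single lower-dimensional $G(v)$ attached to $v$ alone. Consequently your splitting $V=V_1\cup V_2$ and the subsequent gluing by the $\inf$-convolution of \eqref{eq:defLipext} do not get off the ground (and even if they did, \eqref{eq:defLipext} is coordinate-wise and will not keep the values inside $F(v)$).

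The paper's (Brudnyi--Shvartsman) construction resolves this by \emph{enlarging} the vertex set rather than splitting it. For each edge $v_1\leftrightarrow v_2$ one forms the polytope $P(v_1,v_2)=F(v_1)\cap\bigl(F(v_2)+Q(2\rh(v_1,v_2))+x_1-x_2\bigr)$ and reads off from its face structure finitely many hyperplanes $L_i\subseteq F(v_1)$ of dimension $\le k$ and radii $r_i$ with $P(v_1,v_2)=\bigcap_i\bigl(F(v_1)\cap(L_i+Q(r_i))\bigr)$. The new space $\ol V$ has one vertex per triple $(v_1,v_2,i)$, the new map $\ol F(v_1,v_2,i):=L_i$ lands in $\cA_k(\R^n)$ by construction, and the $2^{k+1}$ hypothesis on $\ol V$ pulls back to the $2^{k+2}$ hypothesis on $V$ because $W=\on{pr}_1\ol W\cup\on{pr}_2\ol W$ has $\#W\le 2\,\#\ol W$ --- this doubling, not a direct Helly count, is where the power of $2$ comes from. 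After obtaining the inductive selection $\ol f$ of $\ol F$, one passes to a \emph{cube}-valued map $\hat F(\ol v)=Q(\ol f(\ol v),Cr_{i(\ol v)})$ on $(\ol V,\hat\rh)$ with $\hat\rh$ depending only on $v_1$, applies \Cref{lem:BS26def}, and finally defines $f(v)$ as the orthogonal projection of $\hat f(v)$ onto $F(v)$; the Lipschitz bound for this last step requires a nontrivial Hausdorff-distance estimate. All of these constructions are visibly first-order over the definable data, which is what makes the definability go through. Your outline misses the edge-indexed enlargement and the projection step, both of which are essential.
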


Note that \Cref{thm:BS23def} implies \Cref{thm:B} in view of \Cref{rem:dpm}. 
It is a definable version of \cite[Theorem 2.3]{Brudnyi:2001aa}.  
We closely follow its proof making sure that all steps are definable.

Let us give a rough outline of the general strategy of the proof.
\[
    \xymatrix{
      & \boxed{ (V,\rh) \stackrel{F}{\longrightarrow} \cA_{k+1}(\R^n) } \ar@/^2pc/[dr]|(.6){(1)}   &
      \\
      \boxed{ (\hat V,\hat \rh) \stackrel{\hat F}{\longrightarrow} \cQ(\R^n) } \ar@/^2pc/[ur]|(.4){(3)}
      && 
      \boxed{ (\ol V,\ol \rh) \stackrel{\ol F}{\longrightarrow} \cA_{k}(\R^n) } \ar@/^1pc/[ll]|{(2)}  
  }
\]

It is by induction on $k$. 
The induction step ($k \to k+1$)
is divided into the following three stages (which correspond to the bent arrows in the diagram).    
\begin{enumerate}
    \item From $F : V \to \cA_{k+1}(\R^n)$ one constructs a ``doubling'' $(\ol V, \ol \rh)$ 
        of the space $(V,\rh)$ and a map $\ol F : \ol V \to \cA_k(\R^n)$ 
        in such a way that the induction hypothesis yields a Lipschitz selection $\ol f$ of $\ol F$.
    \item The Lipschitz selection $\ol f$ of $\ol F$ 
        is used to define a new space $(\hat V = \ol V, \hat \rh)$ and 
        a cube-valued map $\hat F : \hat V \to \cQ(\R^n)$; 
        the center of the cube $\hat F(\ol v)$ is $\ol f(\ol v)$. 
        Thanks to \Cref{lem:BS26def}, there is a Lipschitz selection $\hat f$ of $\hat F$ 
        which moreover can be interpreted as a Lipschitz map defined on $(V,\rh)$.
    \item The desired Lipschitz selection $f$ of $F$ is finally found 
        by defining $f(v)$ to be the orthogonal projection of $\hat f(v)$ 
        to the affine subspace $F(v)$ of $\R^n$.
\end{enumerate}

\subsection{Proof of \Cref{thm:BS23def}}

We prove \Cref{thm:BS23def} by induction on $k$.

The base case $k=0$ is trivial: $\cA_0(\R^n) \cong \R^n$ so that $f = F : V \to \R^n$ is the desired definable 
Lipschitz selection. 
Indeed, let $v,v' \in V$ be any two distinct vertices. 
Then there is a path of edges $v=:v_0 \leftrightarrow v_1 \leftrightarrow \cdots \leftrightarrow v_\ell := v'$ 
joining $v$ and $v'$ so that, by assumption (as $\{v_i,v_{i+1}\}$ is admissible) and \eqref{eq:dwgraph},
\begin{equation} \label{eq:2point}
    \|f(v) - f(v')\| \le \sum_{i=0}^{\ell-1} \rh(v_i,v_{i+1}) 
    \le A \sum_{i=0}^{\ell-1} w(v_i,v_{i+1}).
\end{equation}
Taking the infimum over all paths joining $v$ and $v'$ and using again \eqref{eq:dwgraph},
we conclude that $|f|_{\on{Lip}(V,\R^n)} \le A^2$.

Let us assume that the result holds for some $k<n$ and prove it for $k+1$. 
Let $F : V \to \cA_{k+1}(\R^n)$ be a definable map and assume that 
for each admissible subset $W \subseteq V$ of cardinality $\# W\le 2^{k+2}$ 
there exists a Lipschitz selection $f_W : W \to \R^n$ of $F|_W$ 
with $|f_W|_{\on{Lip}(W,\R^n)} \le 1$.

Let $v_1,v_2 \in V$ be such that $v_1 \leftrightarrow v_2$.
Then $\{v_1,v_2\}$ is an admissible subset of $V$ and 
the assumption  
implies that 
there exist points $x_1 \in F(v_1)$, $x_2 \in F(v_2)$ such that 
\begin{equation} \label{eq:x1x2}
    \|x_1 - x_2\| \le \rh(v_1,v_2).
\end{equation} 
It follows that the set
\[
    \{(x_1,x_2) \in F(v_1) \times F(v_2): \|x_1 - x_2\| \le  \rh(v_1,v_2)\}
\] 
is non-empty and definable, since $F$ and $\rh$ are definable.
Therefore, by definable choice, we may assume that $x_i=x_i(v_1,v_2)$, $i=1,2$, depend in a definable way on $v_1$ and $v_2$.
We may conclude that 
\begin{equation}
    \label{eq:C}    
    P(v_1,v_2) := F(v_1) \cap \big(F(v_2) + Q(2 \rh(v_1,v_2)) + x_1(v_1,v_2)-x_2(v_1,v_2)\big)
\end{equation}
is a non-empty convex definable set that is symmetric with respect to the point $x_1$.

Let us understand better the geometry of $P(v_1,v_2)$.
For ease of notation let $U_1 := F(v_1)$ and $U_2 := F(v_2) + x_1 - x_2$.
Then $U_1$ and $U_2$ are affine subspaces of $\R^n$ through $x_1$ of dimension at most $k+1$.
Set $d_i := \dim U_i$, $i=1,2$.
Moreover, the ``slab'' $S_2 := U_2 + Q(2\rh(v_1,v_2))$ is the thickening of $U_2$ by 
the cube $Q(2 \rh(v_1,v_2))$.

If $d_1 =0$, then 
\begin{align*}
    P(v_1,v_2) = U_1  \cap S_2  = \{x_1\} = U_1 \cap \big(\{x_1\} + Q(0) \big).
\end{align*}

Now assume that $d_1 >0$.
If $U_1 \cap U_2 = \{x_1\}$, 
then $P(v_1,v_2)$ is a polytope in $U_1$ centered at $x_1$ with pairwise parallel faces. 
In fact, the pairs of parallel faces result as the intersections with $U_1$ of  
opposite parallel faces of the slab $S_2$. 
Let $p$ be the number of pairs of parallel faces of $P(v_1,v_2)$.
Thus $p \le n$. 
(Note that $p=n$ may occur if $d_2 =0$ so that $S_2$ is 
a cube.)
Let $L_1,\ldots,L_p$
be the affine subspaces of $U_1$ of dimension $d_1 -1$ through $x_1$
that are equidistant to the respective affine hulls of opposite parallel faces of $P(v_1,v_2)$, 
the distance being computed with respect to $\|\cdot \|$ and
having the respective values $r_1,\ldots,r_p$.
Then 
\begin{equation} \label{eq:Liprelim} 
    P(v_1,v_2) = \bigcap_{i=1}^{p} \big( U_1 \cap (L_i + Q(r_i)) \big). 
\end{equation}

Now suppose that $U_1 \cap U_2 \ne \{x_1\}$. 
If $U_1 \cap U_2 \ne U_1$,
then $P(v_1,v_2)$ is a polytope in $U_1$ centered at $x_1$ 
with pairwise parallel faces 
and infinite extension in the directions of $U_1 \cap U_2$.
The number of pairs of parallel faces $p$ again satisfies $p \le n$. 
Let $L_1,\ldots,L_{p}$ be the affine subspaces of $U_1$ of dimension $d_1-1$ 
through $x_1$
that are equidistant 
to the respective affine hulls of 
\emph{finite} opposite parallel faces of $P(v_1,v_2)$. 
As before the distance is computed with respect to $\|\cdot \|$ and 
let $r_1,\ldots,r_{p}$ be the respective values. 
Then we again have the representation \eqref{eq:Liprelim}.

It remains to consider the case $U_1 \cap U_2 = U_1$. 
Then $P(v_1,v_2) = U_1$. 
If $d_1 \le k$, we set $L_1 := U_1$ and $r_1 :=0$ so that \eqref{eq:Liprelim}
remains valid with $p=1$.
If $d_1 = k+1$, we define $L_1 := \{x_1\}$ and $r_1 := \infty$. 
Again  \eqref{eq:Liprelim}
holds true with $p=1$. 
Note that this is the only case, where we allow an infinite radius. 
This case occurs if and only if $F(v_1)$ and $F(v_2)$ are parallel and both have dimension $k+1$.

Thus we proved that in any case 
\begin{equation}
    \label{eq:Li}
    P(v_1,v_2) = \bigcap_{i \in I(v_1,v_2)} \big( F(v_1) \cap (L_i + Q(r_i)) \big), 
\end{equation}
where the index set $I(v_1,v_2)$ has cardinality at least $1$ and at most $n$
and where each $L_i$ is an affine subspace of $F(v_1)$ through $x_1$ of dimension at most $k$. 
The radii $r_i$ are finite unless $F(v_1) \| F(v_2)$ and $\dim F(v_1) = \dim F(v_2) = k+1$.

Let us consider the definable sets 
\[
    \ol V_{\|} := \{(v_1,v_2) : v_1 \leftrightarrow v_2,\, F(v_1) \| F(v_2),\, \dim F(v_1) = \dim F(v_2) = k+1\}.
\]
and 
\[
    \ol V_0 := \{\ol v = (v_1,v_2,i) : v_1 \leftrightarrow v_2,\, (v_1,v_2) \not\in \ol V_{\|},\, i \in I(v_1,v_2)\}.
\]
We get a new definable pseudometric space
$(\ol V_0,\ol \rh)$,
where, for $\ol v = (v_1,v_2,i) \ne \ol v' = (v_1',v_2',i')$,  
\[
    \ol\rh(\ol v, \ol v') := \rh(v_1,v_1') + r_{i} + r_{i'}.
\]
We extend this data to an extended pseudometric space $(\ol V,\ol \rh)$ by 
defining 
\[
    \ol V := \{\ol v = (v_1,v_2,i) : v_1 \leftrightarrow v_2,\, i \in I(v_1,v_2)\}
\]
and putting $\ol \rh(\ol v, \ol v') := +\infty$ if at least one of $\ol v \ne \ol v'$ 
does not belong to $\ol V_0$.

Define $\ol F : \ol V \to  \cA_k(\R^n)$ by setting $\ol F(v_1, v_2, i) := L_i$. 
By the above discussion, $\ol F$ is definable.

\begin{claim} \label{claim1}
    There exists a definable Lipschitz selection $\ol f : \ol V \to  \R^n$ of $\ol F$ 
    with $|\ol f|_{\on{Lip}(\ol V, \R^n)} \le C(k,n)$.  
\end{claim}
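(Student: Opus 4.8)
The plan is to prove Claim~\ref{claim1} by applying the inductive hypothesis (the case $k$ of \Cref{thm:BS23def}) to the map $\ol F : \ol V \to \cA_k(\R^n)$ over the space $(\ol V, \ol \rh)$. Two things must be checked before the induction hypothesis applies: first, that $(\ol V, \ol \rh)$ arises from a definable weighted graph (the induction hypothesis is stated for definable weighted graphs, not arbitrary definable pseudometric spaces, and $\ol \rh$ takes the value $+\infty$, so some care with the graph structure is needed); second, that the finiteness hypothesis holds, i.e.\ that every admissible $\ol W \subseteq \ol V$ with $\# \ol W \le 2^{k+1}$ carries a Lipschitz selection of $\ol F|_{\ol W}$ with seminorm $\le 1$ (or $\le$ some constant, which is harmless after rescaling).

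For the first point, I would equip $\ol V$ with the graph whose edges join $\ol v = (v_1, v_2, i)$ and $\ol v' = (v_1', v_2', i')$ precisely when both lie in $\ol V_0$, with weight $w(\ol v, \ol v') := \ol \rh(\ol v, \ol v')$; then the induced path pseudometric $\ol \si$ coincides with $\ol \rh$ on $\ol V_0$ (for a two-edge path through a third vertex the triangle inequality via $r_j + r_j \ge 0$ only helps), and $+\infty$ otherwise. The definability of this graph and the comparability \eqref{eq:dwgraph} (with $A = 1$, or absorbing the original $A$) follow from the definability of $F$, $\rh$, and the $r_i$, which was established in the geometric discussion. Admissible subsets of $\ol V$ are exactly those contained in $\ol V_0$ with no isolated vertex, and since all pairs in $\ol V_0$ are joined by an edge, every nonempty subset of $\ol V_0$ with more than one element is admissible, while a singleton is admissible by convention.

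For the second point — which I expect to be the main obstacle — I would take an admissible $\ol W = \{(v_1^{(s)}, v_2^{(s)}, i_s)\}_{s} \subseteq \ol V_0$ with $\# \ol W \le 2^{k+1}$. The vertices $v_1^{(s)}, v_2^{(s)}$ appearing here form a subset $W \subseteq V$ of cardinality at most $2 \cdot 2^{k+1} = 2^{k+2}$, and $W$ is admissible in $\Ga$ because each $v_1^{(s)}$ is joined to $v_2^{(s)}$. By the hypothesis of the inductive step (the case $k+1$), there is a Lipschitz selection $f_W : W \to \R^n$ of $F|_W$ with $|f_W|_{\on{Lip}(W,\R^n)} \le 1$. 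The point is then to manufacture from $f_W$ a selection $\ol f_{\ol W}$ of $\ol F|_{\ol W}$: for each $s$, one has $f_W(v_1^{(s)}) \in F(v_1^{(s)})$ and $f_W(v_2^{(s)}) \in F(v_2^{(s)})$ with $\|f_W(v_1^{(s)}) - f_W(v_2^{(s)})\| \le \rh(v_1^{(s)}, v_2^{(s)})$; comparing with the reference points $x_1, x_2$ (which satisfy the same $\|x_1 - x_2\| \le \rh$ bound), one sees that $f_W(v_1^{(s)})$ lies in a translate of $P(v_1^{(s)}, v_2^{(s)})$ — more precisely, the midpoint argument or a projection of $f_W(v_1^{(s)})$ onto $P(v_1^{(s)}, v_2^{(s)})$ lands in $F(v_1^{(s)}) \cap (L_{i_s} + Q(r_{i_s}))$ — and then projecting onto the affine subspace $L_{i_s}$ gives a point $\ol f_{\ol W}(\ol v^{(s)}) \in L_{i_s} = \ol F(\ol v^{(s)})$. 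One must then estimate $\|\ol f_{\ol W}(\ol v^{(s)}) - \ol f_{\ol W}(\ol v^{(t)})\|$ against $\ol\rh(\ol v^{(s)}, \ol v^{(t)}) = \rh(v_1^{(s)}, v_1^{(t)}) + r_{i_s} + r_{i_t}$: the $\rh(v_1^{(s)}, v_1^{(t)})$ term controls the motion of $f_W(v_1^{(\cdot)})$, while the $r_{i_s} + r_{i_t}$ terms absorb the two projection displacements, since projecting onto $L_i$ moves a point of $F(v_1) \cap (L_i + Q(r_i))$ by at most a dimensional constant times $r_i$. This yields $|\ol f_{\ol W}|_{\on{Lip}(\ol W, \R^n)} \le C'(n)$ for an absolute constant, and after dividing through by $C'(n)$ the normalized hypothesis of the induction hypothesis is met.

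Granting these two checks, the induction hypothesis applied to $(\ol V, \ol \rh)$ and $\ol F$ produces a definable Lipschitz selection $\ol f : \ol V \to \R^n$ of $\ol F$ with Lipschitz seminorm bounded by a constant depending only on $k$, $n$, and the comparability constant $A$ — hence $|\ol f|_{\on{Lip}(\ol V, \R^n)} \le C(k,n)$ after tracking the universal constants, which is exactly the assertion of Claim~\ref{claim1}. The delicate bookkeeping is entirely in the second point: getting the selection $\ol f_{\ol W}$ honestly into the $L_i$'s while keeping the Lipschitz constant absolute, and making sure every auxiliary choice (the projections, the reference points) is made definably so that $\ol f_{\ol W}$ need not itself be definable — only the final $\ol f$ from the induction hypothesis is required to be definable, and that is guaranteed by the inductive statement.
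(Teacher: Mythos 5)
Your general strategy matches the paper's: reduce the Lipschitz-selection problem for $\ol F$ to the case $k$ via the induction hypothesis, and manufacture the small-set selections $\ol f_{\ol W}$ from the small-set selections $f_W$ of $F$ by pushing $f_W(v_1(\ol v))$ into $L_{i(\ol v)}$. Your ``second point'' is essentially the paper's argument (with the minor improvement that the paper shows $f_W(v_1) \in P(v_1,v_2)$ directly, without projecting onto $P$, and then takes the $\|\cdot\|$-nearest point of $L_{i(\ol v)}$, which yields Lipschitz constant exactly $1$ rather than a dimensional constant; your rescaling remark is a correct workaround in any case).

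The gap is in your ``first point.'' The graph you build on $\ol V$ is \emph{not} a definable weighted graph. By the paper's definition, a definable weighted graph requires a \emph{finite-valued} definable pseudometric $\rh : V \times V \to [0,\infty)$ satisfying $A^{-1}\rh \le \si \le A\rh$. If $\ol V \ne \ol V_0$ (which happens whenever some $F(v_1)$ and $F(v_2)$ are parallel of dimension $k+1$), then in your graph the vertices of $\ol V \setminus \ol V_0$ are isolated, $\si$ equals $+\infty$ for any pair involving such a vertex, and no finite-valued $\rh$ can be comparable. So the claim ``comparability \eqref{eq:dwgraph} with $A=1$, or absorbing the original $A$'' is false, and the induction hypothesis cannot be applied to $(\ol V, \ol\rh)$ as stated. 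The correct move (which the paper takes) is: restrict to $\ol F_0 := \ol F|_{\ol V_0}$ on $(\ol V_0, \ol\rh)$, which is a genuine definable pseudometric space and hence, by \Cref{rem:dpm}, a definable weighted graph with $A=1$ (all $r_i$ are finite on $\ol V_0$); apply the induction hypothesis there to get a definable Lipschitz selection $\ol f_0 : \ol V_0 \to \R^n$; and then extend to $\ol V$ by setting $\ol f(\ol v) := x_1(\ol v)$ for $\ol v \notin \ol V_0$. The extension is forced, since for $\ol v \notin \ol V_0$ one has $\ol F(\ol v) = L_1 = \{x_1(\ol v)\}$, and the Lipschitz bound on all of $\ol V$ is preserved trivially because $\ol\rh(\ol v, \ol v') = +\infty$ whenever $\ol v \ne \ol v'$ and at least one of them lies outside $\ol V_0$. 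Your proposal omits this restrict-then-extend step, and without it the induction hypothesis does not apply.
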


We will first consider the restriction $\ol F_0 := \ol F|_{\ol V_0}$.  
By \Cref{rem:dpm}, we may view $(\ol V_0,\ol \rh)$ as a definable weighted graph, 
where each subset of vertices is admissible (and $A=1$).
Thus, by the induction hypothesis,
it suffices to show that for every subset $\ol W \subseteq \ol V_0$ of cardinality $\# \ol W \le 2^{k+1}$ 
there is a Lipschitz selection $\ol f_{\ol W} : \ol W \to \R^n$ of $\ol F|_{\ol W}$ with 
$|\ol f_{\ol W}|_{\on{Lip}(\ol W, \R^n)} \le 1$. 

For $\ol v = (v_1,v_2,i)$ we write 
\[
    \on{pr}_1(\ol v) = v_1(\ol v) = v_1,\quad \on{pr}_2(\ol v) =v_2(\ol v)= v_2, \quad \on{pr}_3(\ol v) = i(\ol v)= i. 
\]
Set $W := \on{pr}_1 \ol W \cup \on{pr}_2 \ol W$.
Then $W$ is an admissible subset of $V$ of cardinality $\# W \le 2 \ol W \le 2^{k+2}$, since for each $v \in W$ 
there is $v'\in W$ such that $v \leftrightarrow v'$. 
Thus, by the assumption on $F$, there exists a Lipschitz selection $f_W : W \to \R^n$ of $F|_W$ 
with $|f_W|_{\on{Lip}(W,\R^n)} \le 1$. 

Let $v_1,v_2 \in W$. Then $f_W(v_1) \in F(v_1)$ and, by \eqref{eq:x1x2}, 
\begin{align*}
    \MoveEqLeft
    \|f_W(v_1) - f_W(v_2) - x_1(v_1,v_2)+ x_2(v_1,v_2)\| 
    \\
  &\le \|f_W(v_1) - f_W(v_2)\| + \| x_1(v_1,v_2) -  x_2(v_1,v_2)\| 
  \\
  &\le  2\, \rh(v_1,v_2).
\end{align*}
Consequently, $f_W(v_1) \in P(v_1,v_2)$; cf.\ \eqref{eq:C}.

Now define $\ol f_{\ol W} : \ol W \to \R^n$ by 
letting $\ol f_{\ol W}(\ol v)$ be a point in $\ol F(\ol v) = L_{i(\ol v)}$ which minimizes the $\|\cdot \|$-distance to 
$f_{W}(v_1(\ol v))$. 
Since $f_{W}(v_1(\ol v)) \in P(v_1(\ol v),v_2(\ol v))$, 
\eqref{eq:Li} implies that 
$f_{W}(v_1(\ol v)) \in L_{i(\ol v)} + Q(r_{i(\ol v)})$. 
Then (by the minimality property of $\ol f_{\ol W} (\ol v)$) we have
\[
    \|\ol f_{\ol W} (\ol v) - f_{W}(v_1(\ol v))\| \le r_{i(\ol v)},
\] 
and consequently
\begin{align*}
    \|\ol f_{\ol W} (\ol v) - \ol f_{\ol W}(\ol v')\| 
  &\le \|f_{W} (v_1(\ol v)) - f_{W}(v_1(\ol v'))\| 
  +  r_{i(\ol v)} + r_{i(\ol v')}  
  \\
  &\le \rh(v_1(\ol v)),v_1(\ol v')) 
  +  r_{i(\ol v)} + r_{i(\ol v')}  
  \\
  &=  \ol \rh(\ol v,\ol v').
\end{align*}
Thus, $|\ol f_{\ol W}|_{\on{Lip}(\ol W,\R^n)} \le 1$.

By the induction hypothesis,
we may infer that there is a definable 
Lipschitz selection $\ol f_0 : \ol V_0 \to  \R^n$ of $\ol F_0$ 
with $|\ol f_0|_{\on{Lip}(\ol V_0, \R^n)} \le C(k,n)$.  

Let us extend $\ol f_0$ to a map $\ol f : \ol V \to \R^n$ by 
setting 
\[
    \ol f(\ol v) := 
    \begin{cases}
        \ol f_0(\ol v) & \text{ if } \ol v \in \ol V_0,
        \\
        x_1(\ol v) & \text{ if } \ol v \not\in \ol V_0.   
    \end{cases}
\]
Then $\ol f$ is obviously a definable selection of $\ol F$. 
Evidently, $|\ol f|_{\on{Lip}(\ol V, \R^n)} \le C(k,n)$, 
since $\ol \rh (\ol v,\ol v') = \infty$ whenever at least one of $\ol v \ne \ol v'$ 
does not belong to $\ol V_0$. 

Thus \Cref{claim1} is proved.

\begin{claim} \label{claim2}
    There is a definable Lipschitz map $\hat f : \ol V \to \R^n$ such that  
    \begin{enumerate}
        \item[(i)] $\hat f (\ol v)$ depends only on $v_1 (\ol v)$; thus we may regard $\hat f$ as a map defined on $V$, 
        \item[(ii)] $|\hat f|_{\on{Lip}(V,\R^n)} \le C$,
        \item[(iii)] $\| \hat f(\ol v) - \ol f(\ol v)\|\le C\, r_{i(\ol v)}$ for all $\ol v \in \ol V$,
    \end{enumerate} 
    where $C=C(k,n)$ is the constant from \Cref{claim1}.
\end{claim}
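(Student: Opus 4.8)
The plan is to produce $\hat f$ from $\ol f$ by a cube-valued selection argument, following the middle stage of the Brudnyi--Shvartsman scheme and invoking \Cref{lem:BS26def}. First I would define a cube-valued map on a suitable definable pseudometric space built out of $(V,\rh)$ whose selections are exactly the maps we want. Concretely, for $v \in V$ set $\hat F(v)$ to be the intersection over all $\ol v \in \ol V_0$ with $v_1(\ol v) = v$ of the cubes $Q\bigl(\ol f_0(\ol v), C'\, r_{i(\ol v)}\bigr)$ — one forces any selection to lie within distance $C' r_{i(\ol v)}$ of $\ol f_0(\ol v)$, which is (iii), while the center-of-mass / midpoint nature of these cubes (each $L_i$ passes through $x_1(v_1,v_2)$ and $P(v_1,v_2)$ is symmetric about $x_1$) will make the intersection non-empty. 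If no such $\ol v$ exists, or to handle the case $r_{i(\ol v)} = \infty$, set $\hat F(v) := \R^n$. This $\hat F$ is definable because $\ol V_0$, $\ol f_0$, the projection $\on{pr}_1$, and the radii $r_i$ are all definable, and a cube is a definable function of its center and radius; it takes values in $\cQ(\R^n)$ by construction.

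Next I would verify the two-point hypothesis of \Cref{lem:BS26def} for $\hat F$ on the space $(V, C'' \rh)$ for an appropriate absolute constant $C''$ (rescaling $\rh$ so the target Lipschitz constant is $1$). Given $v, v' \in V$ joined by a path of edges, one needs points $y \in \hat F(v)$, $y' \in \hat F(v')$ with $\|y - y'\| \le C'' \rh(v,v')$. The natural candidates are $y := \ol f_0(\ol v)$ and $y' := \ol f_0(\ol v')$ for well-chosen $\ol v$ over $v$ and $\ol v'$ over $v'$ — but one must check these actually lie in the respective intersections, which is where the geometry of the polytopes $P(v_1,v_2)$ enters: two affine subspaces $L_i$ and $L_j$ through the common point $x_1(v_1,v_2)$ that are equidistant hyperplanes of $P(v_1,v_2)$ satisfy a Helly-type relation forcing $\ol f_0$ evaluated at the two indices to be mutually close in terms of $r_i + r_j + \rh$, exactly the bound $\ol\rh$ used in \Cref{claim1}. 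Combining this with $|\ol f_0|_{\on{Lip}(\ol V_0,\R^n)} \le C(k,n)$ and \eqref{eq:dwgraph} gives the two-point selection with a constant depending only on $k$, $n$, $A$. Applying \Cref{lem:BS26def} yields a definable Lipschitz selection $\hat f$ of $\hat F$ on $(V,\rh)$, which automatically satisfies (i) (it is defined on $V$, and we read it on $\ol V$ via $\on{pr}_1$), (ii) (it is Lipschitz with the stated constant after undoing the rescaling), and (iii) (membership in $\hat F(v)$ forces $\|\hat f(v) - \ol f_0(\ol v)\| \le C' r_{i(\ol v)}$ for every $\ol v$ over $v$; the case $\ol v \notin \ol V_0$ is vacuous since then either $r_{i(\ol v)} = \infty$ or $\ol f(\ol v) = x_1(\ol v)$ lies in every relevant cube by symmetry of $P$ about $x_1$).

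The main obstacle I anticipate is the non-emptiness of $\hat F(v)$, i.e., showing the intersection of all the cubes $Q(\ol f_0(\ol v), C' r_{i(\ol v)})$ over $\ol v$ with $v_1(\ol v) = v$ is non-empty for a uniform $C'$. This is not automatic from the one-dimensional two-point condition; it requires a Helly argument in $\R^n$ (cubes have Helly number $n+1$, so it suffices to check non-emptiness of $(n+1)$-fold intersections) together with the a-priori bound that $\ol f_0$, being a Lipschitz selection of $\ol F_0$ with respect to $\ol\rh$, cannot stray from the ``anchor'' $x_1(\ol v)$ by more than a controlled multiple of $r_{i(\ol v)}$ plus path-lengths that telescope. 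Making this uniform across all $v$ and extracting the constant $C'=C(k,n)$ is the delicate point; everything else is bookkeeping of definability, which is straightforward since every object involved is a definable function of definable data.
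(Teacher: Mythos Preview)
Your overall plan---reduce to a cube-valued map and invoke \Cref{lem:BS26def}---is correct, but the paper executes it via a much simpler device that sidesteps the obstacles you worry about. The paper does \emph{not} define $\hat F$ on $V$ as an intersection of cubes over the fiber $\on{pr}_1^{-1}(v)$. Instead it keeps $\hat V := \ol V$ and equips it with the \emph{degenerate} pseudometric
\[
\hat\rh(\ol v,\ol v') := C\,\rh\bigl(v_1(\ol v),v_1(\ol v')\bigr),
\]
then sets $\hat F(\ol v) := Q\bigl(\ol f(\ol v),\,C\,r_{i(\ol v)}\bigr)$, a \emph{single} cube. The two-point hypothesis of \Cref{lem:BS26def} is then immediate from \Cref{claim1}: the inequality $\|\ol f(\ol v)-\ol f(\ol v')\|\le C\,\ol\rh(\ol v,\ol v') = C r_{i(\ol v)} + C r_{i(\ol v')} + \hat\rh(\ol v,\ol v')$ says exactly that the two cubes are within $\hat\rh(\ol v,\ol v')$ of each other. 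The resulting selection $\hat f$ then satisfies $\hat f(\ol v)=\hat f(\ol v')$ whenever $v_1(\ol v)=v_1(\ol v')$, because $\hat\rh$ vanishes there; this gives (i) for free, and (iii) is just membership in the single cube $\hat F(\ol v)$. No intersection, no Helly, no non-emptiness issue.

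Your route can be made to work, but as written it has gaps. First, the intersection of cubes $Q(x,r)$ is a general axis-parallel box, not an element of $\cQ(\R^n)$ as you claim, so \Cref{lem:BS26def} needs a (trivial) extension. Second, the Helly number for axis-parallel boxes is $2$, not $n+1$, since the problem is coordinate-wise; together with \Cref{claim1} (taking $\rh(v,v)=0$) this settles non-emptiness immediately, so your ``main obstacle'' is in fact easy. Third---and this is the real gap---your proposed witnesses $y=\ol f_0(\ol v)$ for the two-point check need not lie in the intersection $\hat F(v)$: $\ol f_0(\ol v)$ is the center of one cube in the family, not a point of their common intersection. The two-point check on $V$ can be rescued by a direct coordinate-wise distance estimate for the resulting boxes (using \Cref{claim1} again), but at that point you have essentially reinvented the paper's argument in a more awkward form. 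The degenerate-pseudometric trick is the missing idea.
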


In order to define $\hat f$, we consider the definable pseudometric space $(\hat V,\hat \rh)$, where $\hat V := \ol V$ and 
\[
    \hat \rh(\ol v,\ol v') := C \, \rh(v_1 (\ol v), v_1 (\ol v')).
\]
Let $\hat F : \hat V \to \cQ(\R^n)$ be defined by $\hat F(\ol v) := Q(\ol f(\ol v), C r_{i(\ol v)})$. 
Then $\hat F$ is definable. 
We will show that $\hat F$ has a definable Lipschitz selection $\hat f : \hat V \to \R^n$ such that 
$|\hat f|_{\on{Lip}(\hat V,\R^n)} \le 1$. Then \Cref{claim2} follows easily:
we have
\[
    \|\hat f(\ol v) - \hat f(\ol v')\| \le \hat \rh(\ol v,\ol v') = C \, \rh(v_1 (\ol v), v_1 (\ol v'))
\]
so that $\hat f(\ol v) = \hat f(\ol v')$ if $v_1 (\ol v)= v_1 (\ol v')$ which implies (i) and (ii).
Since $\hat f$ is a selection of $\hat F$, (iii) is clear.

Let $\ol v,\ol v' \in \hat V = \ol V$ be any two distinct points.
By \Cref{claim1},
\[
    \|\ol f(\ol v) - \ol f(\ol v')\| \le C \, \ol \rh(\ol v, \ol v') = C\, r_{i(\ol v)} + C\, r_{i(\ol v')} + C\,\rh(v_1(\ol v),v_1(\ol v')).
\]
This means that the cubes $\hat F(\ol v)$ and $\hat F(\ol v')$ have $\|\cdot\|$-distance at most $C\,\rh(v_1(\ol v),v_1(\ol v'))$.
So there exist points $y \in \hat F(\ol v)$ and $y' \in \hat F(\ol v')$ 
such that 
\[
    \|y-y'\|\le C\,\rh(v_1(\ol v),v_1(\ol v')) = \hat \rh(\ol v,\ol v').
\]
In other words, for every $2$-point subset $\{\ol v,\ol v'\}$ of $\hat V$ the restriction $\hat F|_{\{\ol v,\ol v'\}}$ has 
a Lipschitz selection $\hat f_{\{\ol v,\ol v'\}}$ with $|\hat f_{\{\ol v,\ol v'\}}|_{\on{Lip}(\{\ol v,\ol v'\},\R^n)} \le 1$.
By \Cref{lem:BS26def}, there is a definable Lipschitz selection $\hat f : \hat V \to \R^n$ of $\hat F$ with 
$|\hat f|_{\on{Lip}(\hat V,\R^n)} \le 1$. This ends the proof of \Cref{claim2}.

\begin{claim} \label{claim3}
    The desired definable Lipschitz selection $f : V \to \R^n$ of $F$ with $|f|_{\on{Lip}(V,\R^n)} \le C_1(k,n,A)$ is given by
    \[
        f(v) := \on{pr}_{F(v)} \hat f(v),
    \]
    i.e., $f(v)$ is the orthogonal projection of $\hat f(v)$ to the affine subspace $F(v)$ of $\R^n$. 
\end{claim}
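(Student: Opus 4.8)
The plan is to verify that $f(v) := \on{pr}_{F(v)} \hat f(v)$ is definable, is a selection, and has controlled Lipschitz seminorm. Definability is immediate: the orthogonal projection onto an affine subspace is given by a semialgebraic formula in the data describing the subspace, so since $F$ and $\hat f$ are definable, so is $f$; and $f(v) \in F(v)$ by construction, so $f$ is a selection. The only substantive point is the Lipschitz estimate, and the strategy is to compare $f(v)$ with $\hat f(v)$ by using the auxiliary maps $\ol f$ and the polytopes $P(v_1,v_2)$ built in the previous claims.

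First I would fix an edge $v_1 \leftrightarrow v_2$ in $\Ga$ and estimate $\|f(v_1) - f(v_2)\|$. The key geometric observation is that $\hat f(v_1)$ is close to the polytope $P(v_1,v_2) \subseteq F(v_1)$: indeed by \Cref{claim2}(iii) and \eqref{eq:Li}, $\hat f(v_1)$ lies within distance $C\,r_{i}$ of each slab $L_i + Q(r_i)$ for $i \in I(v_1,v_2)$, since $\ol f((v_1,v_2,i)) \in L_i$ and $\|\hat f((v_1,v_2,i)) - \ol f((v_1,v_2,i))\| \le C r_i$; combining the $p \le n$ such estimates (the polytope has at most $n$ pairs of faces) bounds the distance from $\hat f(v_1)$ to $P(v_1,v_2)$ by a dimensional constant times $\max_i r_i$, hence (using that $x_1 \in P(v_1,v_2)$ and symmetry) by a dimensional constant times $\rh(v_1,v_2)$. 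Since $P(v_1,v_2) \subseteq F(v_1)$, the orthogonal projection $\on{pr}_{F(v_1)}$ does not increase this distance, so $\|f(v_1) - \hat f(v_1)\| \le C'(n)\,\rh(v_1,v_2)$, and symmetrically for $v_2$. Here I expect the main obstacle: carefully controlling the distance from a point to an intersection of slabs $\bigcap_i(L_i + Q(r_i))$ in terms of the individual slab-distances requires a quantitative lemma about polytopes with pairwise parallel faces in $\R^n$ (this is exactly the kind of convexity/John-type estimate that Brudnyi--Shvartsman isolate); one must also handle the degenerate cases (some $d_1 = 0$, or $r_1 = \infty$ when $F(v_1)\|F(v_2)$ both of dimension $k+1$, in which case $f(v) = \on{pr}_{F(v)}\hat f(v)$ and the two projections onto parallel $(k+1)$-planes differ by a controlled amount anyway).

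Given the pointwise bound along edges, I would then chain it with \Cref{claim2}(ii): for an edge $v_1 \leftrightarrow v_2$,
\[
    \|f(v_1) - f(v_2)\| \le \|f(v_1) - \hat f(v_1)\| + \|\hat f(v_1) - \hat f(v_2)\| + \|\hat f(v_2) - f(v_2)\| \le C''(k,n)\,\rh(v_1,v_2) \le C''(k,n)\, A\, w(v_1,v_2),
\]
using \eqref{eq:dwgraph} for the last step. Finally, for arbitrary distinct $v,v' \in V$, choose a path $v = v_0 \leftrightarrow v_1 \leftrightarrow \cdots \leftrightarrow v_\ell = v'$, sum the edge estimates, take the infimum over all such paths to get $\|f(v)-f(v')\| \le C''(k,n)\,A\,\si(v,v')$, and apply \eqref{eq:dwgraph} once more to conclude $|f|_{\on{Lip}(V,\R^n)} \le C''(k,n)\, A^2 =: C_1(k,n,A)$. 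This completes the induction step from $k$ to $k+1$ and hence the proof of \Cref{thm:BS23def}.

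One remark on bookkeeping: all constants produced are dimensional (depending only on $k$ and $n$) except for the two factors of $A$ introduced at the very end via \eqref{eq:dwgraph}, so the final constant has the advertised form $C_1(k,n,A)$; and since each step — definable choice of $x_1,x_2$, the induction hypothesis applied to $\ol F_0$, \Cref{lem:BS26def} applied to $\hat F$, and the semialgebraic formula for orthogonal projection — preserves definability, the selection $f$ is definable as required.
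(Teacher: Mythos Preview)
Your decomposition
\[
\|f(v_1) - f(v_2)\| \le \|f(v_1) - \hat f(v_1)\| + \|\hat f(v_1) - \hat f(v_2)\| + \|\hat f(v_2) - f(v_2)\|
\]
does not give the desired bound, and the error is in the sentence ``hence (using that $x_1 \in P(v_1,v_2)$ and symmetry) by a dimensional constant times $\rh(v_1,v_2)$.'' The radii $r_i$ are \emph{not} controlled by $\rh(v_1,v_2)$: they measure the half-widths of the polytope $P(v_1,v_2) = F(v_1)\cap(F(v_2)+Q(2\rh(v_1,v_2))+x_1-x_2)$, and when $F(v_1)$ and $F(v_2)$ are nearly parallel this intersection is very elongated. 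Concretely, for two lines in $\R^2$ through the origin at angle $\th$ with $\rh(v_1,v_2)=\ep$, one has $r_1 \approx \ep/\th$. Claim~2(iii) then only gives $\|\hat f(v_1)-\ol f(\ol v)\|\le C r_1 \approx C\ep/\th$, so $\hat f(v_1)$ may sit at distance $\approx C\ep/\th$ from $F(v_1)$, and hence $\|f(v_1)-\hat f(v_1)\|$ can be of order $\ep/\th \gg \rh(v_1,v_2)$. Your triangle inequality then yields a useless bound, even though the true quantity $\|f(v_1)-f(v_2)\|$ \emph{is} of order $\ep$ (the two projections of a point at distance $\approx \ep/\th$ onto lines at angle $\th$ differ by $\approx (\ep/\th)\cdot\th=\ep$).

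The paper's proof avoids this by a different three-term decomposition that never isolates $\|f(v)-\hat f(v)\|$. One introduces the translate $F(v,v')$ of $F(v')$ passing through $f(v)$ and writes
\[
\|f(v)-f(v')\| \le \|\on{pr}_{F(v)}\hat f(v)-\on{pr}_{F(v,v')}\hat f(v)\| + \|\on{pr}_{F(v,v')}\hat f(v)-\on{pr}_{F(v,v')}\hat f(v')\| + \|\on{pr}_{F(v,v')}\hat f(v')-\on{pr}_{F(v')}\hat f(v')\|.
\]
The middle term is $\le \sqrt n\,\|\hat f(v)-\hat f(v')\|$; the last is controlled since $F(v,v')$ and $F(v')$ differ by a translation of size $\le C\rh(v,v')$. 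The genuinely delicate term is the first, and here the large $r_i$ work \emph{for} you rather than against you: one shows (this is the paper's Claim~4) that $\hat f(v)$ lies in the dilated cube $\la K_v$ and that, within a Euclidean ball $B$ of comparable size centered at $f(v)$, the sets $B\cap F(v)$ and $B\cap F(v,v')$ have Hausdorff distance $\le C\rh(v,v')$. A lemma of Brudnyi--Shvartsman (their Lemma~2.9) then bounds the difference of the two projections of $\hat f(v)\in B$ by that Hausdorff distance. This is the ``convexity/John-type estimate'' you allude to, but it is applied to the projections onto the two affine subspaces, not to the distance from $\hat f$ to the polytope.
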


It is clear that $f$ is a definable selection of $F$. To check the Lipschitz property 
we show that 
\begin{equation} \label{eq:fLip}
    \|f(v) - f(v')\| \le C_1 \, \rh(v,v'), \quad \text{ whenever }  v \leftrightarrow v', 
\end{equation}
where $C_1 = C_1(k,n)$. 
This suffices 
in view of \eqref{eq:definitionrho} and \eqref{eq:dwgraph} (and a computation similar to \eqref{eq:2point}
after which $C_1$ will also depend on $A$).

In order to prove \eqref{eq:fLip}, we need some technical facts.
First of all, we may assume that $\dim F(v) \ge \dim F(v')$; 
if not we just interchange the roles of $v$ and $v'$.

\begin{claim} \label{claim4}
    The cube $K_v := \bigcap \{Q(f(v),r_{i(\ol v)}) : \ol v \in \ol V,\, \on{pr}_1 (\ol v) = v \}$ satisfies 
    \[ 
        \hat f(v) \in \la K_v,
    \] 
    where $\la := (1+\sqrt n) C$ and $C$ is the constant from \Cref{claim1},
    and
    \[
        K_v \cap F(v) \subseteq F(v') + Q(2 \la \rh(v,v')) + x_1(v,v') - x_2(v,v')
    \]
    whenever $v \leftrightarrow v'$. 
    (Recall that $\la K_v$ denotes the cube with the same center as $K_v$ and 
    $\la$ times its radius.)
\end{claim}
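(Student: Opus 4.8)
The plan is to analyze the geometry of the polytope $P(v,v')$ described in \eqref{eq:Li} and transport the membership information from $\hat f$ through $\ol f$ to $f$. First I would record the two facts that drive everything: from \Cref{claim2}(iii) we have $\|\hat f(v) - \ol f(\ol v)\| \le C\, r_{i(\ol v)}$ for every $\ol v$ with $\on{pr}_1(\ol v)=v$, and since $\ol f(\ol v) \in \ol F(\ol v) = L_{i(\ol v)}$ is an affine subspace through $x_1(v_1(\ol v),v_2(\ol v))$, in particular $\ol f(\ol v) \in F(v)$. To prove $\hat f(v) \in \la K_v$ I would estimate, for each relevant $\ol v$, the $\|\cdot\|$-distance from $\hat f(v)$ to the center $f(v) = \on{pr}_{F(v)}\hat f(v)$ relative to $r_{i(\ol v)}$: using $\|\hat f(v)-f(v)\|_2 \le \|\hat f(v)-\ol f(\ol v)\|_2$ (since $\ol f(\ol v)\in F(v)$ and $f(v)$ is the orthogonal projection) together with the norm comparison $\|\cdot\| \le \|\cdot\|_2 \le \sqrt n\|\cdot\|$, one gets $\|\hat f(v)-f(v)\| \le \sqrt n \, \|\hat f(v)-\ol f(\ol v)\| \le \sqrt n\, C\, r_{i(\ol v)}$, and hence $\|f(v) - (\text{the } L_{i(\ol v)}\text{-nearest point to } f(v))\|$ and the distance from $f(v)$ to the two faces associated with $r_{i(\ol v)}$ are controlled by $(1+\sqrt n)C\, r_{i(\ol v)} = \la r_{i(\ol v)}$. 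Taking the intersection over all $\ol v$ with $\on{pr}_1(\ol v)=v$ shows $\hat f(v) \in \la K_v$, where one should be a little careful that $f(v)$ itself is the center of $\la K_v$ and that, by \eqref{eq:Li}, the cube $K_v$ is exactly (an intersection giving) the relevant slab structure around $f(v)$.

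Next, for the inclusion $K_v \cap F(v) \subseteq F(v') + Q(2\la\rh(v,v')) + x_1(v,v') - x_2(v,v')$, I would invoke the description of $P(v,v') = P(v_1,v_2)$ with $v_1 = v$, $v_2 = v'$ from \eqref{eq:C} and \eqref{eq:Li}: $P(v,v') = \bigcap_{i\in I(v,v')}\big(F(v)\cap(L_i+Q(r_i))\big)$ and $P(v,v') = F(v)\cap\big(F(v')+Q(2\rh(v,v'))+x_1-x_2\big)$. The cube $K_v$ is the intersection of $Q(f(v),r_{i(\ol v)})$ over all $\ol v$ with $\on{pr}_1(\ol v)=v$; in particular it includes the terms $\ol v = (v,v',i)$ for $i\in I(v,v')$, so $K_v \subseteq \bigcap_{i\in I(v,v')}Q(f(v),r_i)$. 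Intersecting with $F(v)$ and using that each $L_i$ is an affine subspace of $F(v)$ through $x_1(v,v')$ while $f(v)\in F(v)$, a translate-and-slab argument shows $K_v\cap F(v)$ sits inside a $\la$-dilated version of $P(v,v')$ recentered at $x_1(v,v')$, which by the $P(v,v')$-description expands the cube $Q(2\rh(v,v'))$ to $Q(2\la\rh(v,v'))$; this gives the claimed inclusion. The key point here is that dilating the polytope $P(v,v')$ by $\la$ about its center $x_1$ only dilates the defining cube-thickening, because the affine directions $L_i$ and $F(v'),F(v)$ are unchanged under dilation.

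The main obstacle I anticipate is the bookkeeping around centers: $K_v$ is defined as a cube centered at $f(v)$, whereas the polytope $P(v,v')$ and the $L_i+Q(r_i)$ are centered at $x_1(v,v')$, and $f(v)$ need not equal $x_1(v,v')$. One must show that $f(v)$ is close enough to $x_1(v,v')$ — indeed within $O(\la\rh(v,v'))$ — which again follows from $\hat f(v)\in\la K_v$ combined with $x_1(v,v')\in P(v,v')\subseteq K_v\cap F(v)$ (when $r_i$ finite; the infinite-radius case is handled by the exceptional clause and causes no constraint), so that both $f(v)$ and $x_1(v,v')$ lie in a commensurable neighborhood and the re-centering costs only an extra factor absorbed into $\la$. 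Once this is in place, the two displayed inclusions of \Cref{claim4} follow by assembling the slab-by-slab estimates; the rest is the norm comparison $\|\cdot\|\le\|\cdot\|_2\le\sqrt n\,\|\cdot\|$ applied uniformly. I would also remark that all the sets involved ($K_v$, $F(v)$, the translates) are definable, though definability is not asserted in this particular claim and plays no role in its proof.
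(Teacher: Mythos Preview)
Your argument for the first assertion ($\hat f(v)\in\la K_v$) is essentially the paper's: using $\ol f(\ol v)\in L_{i(\ol v)}\subseteq F(v)$ and the orthogonal-projection minimality of $f(v)$, one gets $\|\hat f(v)-f(v)\|\le \sqrt n\,C\,r_{i(\ol v)}$ (and, via the triangle inequality, $\|f(v)-\ol f(\ol v)\|\le \la\,r_{i(\ol v)}$). The paper records instead the bound $\|f(v)-\ol f(\ol v)\|\le\sqrt n\,C\,r_{i(\ol v)}$ directly, which is slightly sharper and is what makes the constant come out exactly $\la=(1+\sqrt n)C$; your route works with a minor adjustment of constants.

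The second assertion, however, has a real gap. You correctly observe that $K_v\subseteq\bigcap_{i\in I(v,v')}Q(f(v),r_i)$, but then you try to pass from cubes centered at $f(v)$ to the slabs $L_i+Q(\la r_i)$ by comparing $f(v)$ with $x_1(v,v')$. Your justification for this, namely $x_1(v,v')\in P(v,v')\subseteq K_v\cap F(v)$, is false: $K_v$ is an intersection of cubes centered at $f(v)$, while $P(v,v')$ is an intersection of slabs centered at $x_1(v,v')$, and there is no inclusion between them in general. More fundamentally, there is no bound of the form $\|f(v)-x_1(v,v')\|\le O(\la\,\rh(v,v'))$: the half-widths $r_i$ can be arbitrarily large compared to $\rh(v,v')$ (when $F(v)$ and $F(v')$ meet at a small angle), and $f(v)$, $x_1(v,v')$ may lie far apart inside $F(v)$.

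The correct bridge is the one you already computed in the first part but did not reuse: $\ol f(\ol v)\in L_i$ and $\|f(v)-\ol f(\ol v)\|\le\sqrt n\,C\,r_i$. For $x\in K_v$ this gives $\|x-\ol f(\ol v)\|\le\|x-f(v)\|+\|f(v)-\ol f(\ol v)\|\le r_i+\sqrt n\,C\,r_i\le\la\,r_i$ (using $C\ge 1$), hence $x\in L_i+Q(\la r_i)$. Intersecting over $i\in I(v,v')$ and with $F(v)$, and then invoking \eqref{eq:C} and \eqref{eq:Li} dilated by the factor $\la$ about $x_1(v,v')$, yields the inclusion directly---no comparison of $f(v)$ with $x_1(v,v')$ is needed.
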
 

Since $\ol v := (v,v',i) \in \ol V$, for $i \in I(v,v')$, we have, by \Cref{claim2}(iii), 
\begin{equation*}
    \|\hat f(v) - \ol f(\ol v)\| \le C\, r_{i(\ol v)}.
\end{equation*}
As $\ol f(\ol v) \in \ol F(\ol v) = L_{i(\ol v)} \subseteq F(v)$, the definition of $f(v)$ implies 
\begin{align*}
    \|f(v) - \ol f(\ol v)\| &\le \|f(v) - \ol f(\ol v)\|_2 
    \\
                            &\le \|\hat f(v) - \ol f(\ol v)\|_2
    \le
    \sqrt n\, \|\hat f(v) - \ol f(\ol v)\| \le \sqrt n\, C\, r_{i(\ol v)}.   
\end{align*} 
These inequalities easily yield $\hat f(v) \in \la K_v$. 

If $x \in K_v$, then 
\[
    \|x - f(v)\|\le r_{i(\ol v)}, \quad \text{ for all } \ol v \in \ol V \text{ such that } \on{pr}_1 (\ol v) = v,
\]
whence 
\[
    \|x - \ol f(\ol v)\|\le \la r_{i(\ol v)}.
\]
Thus, and by the fact that  $I(v,v') = \{i(\ol v) :  \ol v \in \ol V,\, \on{pr}_1 (\ol v) = v, \, \on{pr}_2 (\ol v) = v'\}$, 
\[
    K_v \subseteq \bigcap_{i \in I(v,v')} Q(\ol f(\ol v), \la r_{i}) \subseteq \bigcap_{i \in I(v,v')} \big( L_i + Q(\la r_{i}) \big).
\]
Intersecting both sides with $F(v)$ and using \eqref{eq:C} and \eqref{eq:Li} (modulo a dilation with center $x_1(v,v')$ by the factor $\la$), 
also the second assertion in \Cref{claim4} follows.

\medskip

Let us continue with the proof of \Cref{claim3}.
By \Cref{claim4}, 
there exists $y(v,v') \in F(v') +  x_1(v,v') - x_2(v,v')$ with
\begin{equation} \label{eq:y}
    \|f(v) - y(v,v')\| \le 2 \la\, \rh(v,v'). 
\end{equation}
We consider the translate $F(v,v')$ of $F(v')$ through $f(v)$ defined by 
\[
    F(v,v') := F(v') + x_1(v,v') - x_2(v,v') - y(v,v') + f(v).
\]
Let us prove \eqref{eq:fLip}. For $v \leftrightarrow v'$,
we have   
\begin{align*}
    \|f(v) - f(v')\| &\le \|\on{pr}_{F(v)} \hat f(v) - \on{pr}_{F(v,v')} \hat f(v)\| 
                   \\&\quad + 
                   \|\on{pr}_{F(v,v')} \hat f(v) - \on{pr}_{F(v,v')} \hat f(v')\| 
                   \\&\quad + \|\on{pr}_{F(v,v')} \hat f(v') - \on{pr}_{F(v')} \hat f(v')\|. 
\end{align*}
The second term on the right-hand side is bounded by 
\[
    \|\hat f(v) - \hat f(v')\|_2 \le \sqrt n\, \|\hat f(v) - \hat f(v')\| \le C \sqrt n\, \rh(v,v'),
\]
thanks to \Cref{claim2}. The third term is bounded by 
\[
    \|x_1(v,v') - x_2(v,v') - y(v,v') + f(v)\|_2 \le \sqrt n (1+ 2 \la)\,\rh(v,v'),
\]
in view of \eqref{eq:x1x2} and \eqref{eq:y}.

It remains to estimate the first term.
By \Cref{claim4} and \eqref{eq:y}, 
\[
    K_v \cap F(v) \subseteq F(v,v') + Q(4\la\, \rh(v,v')),
\]
and, by a dilation with center $f(v)$ and factor $\sqrt n\, \la$, we find
\[
    \big(\sqrt n\, \la  K_v\big) \cap F(v) \subseteq F(v,v') + Q(4\sqrt n\, \la^2 \rh(v,v')).
\]
Let $B$ be the biggest Euclidean ball with center $f(v)$ that is contained in $\sqrt n\, \la  K_v$. 
Then 
\begin{equation*}
    \la K_v \subseteq B \subseteq \sqrt n\, \la K_v.
\end{equation*}
Let $B(r)$ be the Euclidean ball with center $0$ and radius  
$r:= 4 n \la^2 \rh(v,v')$.
Then 
\[
    B \cap F(v) \subseteq F(v,v') + B(r), 
\]
and since the orthogonal projection of a point in $B \cap F(v)$ on $F(v,v')$ is contained in $B \cap F(v,v')$, 
we have 
\[
    B \cap F(v) \subseteq B \cap F(v,v') + B(r).
\]
Recall that we assumed that $\dim F(v) \ge \dim F(v') = \dim F(v,v')$ (see 
the paragraph before \Cref{claim4}).
Since both these affine spaces pass through the center $f(v)$ of $B$, we thus also have
\[
    B \cap F(v,v') \subseteq B \cap F(v) + B(r). 
\]
That means that for the Hausdorff distance of $B \cap F(v)$ and $B \cap F(v,v')$ we have
\[
    d_\cH(B \cap F(v),B \cap F(v,v')) \le r = 4 n \la^2 \rh(v,v').
\] 
Now it suffices to invoke \cite[Lemma 2.9]{Brudnyi:2001aa} (note that is uses $\hat f(v) \in \la K_v$ shown in \Cref{claim4}) 
which 
gives that 
\[
    \|\on{pr}_{F(v)} \hat f(v) - \on{pr}_{F(v,v')} \hat f(v)\| \le  d_\cH(B \cap F(v),B \cap F(v,v')).
\] 
This completes the proof of \Cref{claim3} and thus the proof of \Cref{thm:BS23def}.

\subsection{Proof of \Cref{thm:C}}
\label{sec:proofC}

Let us define 
\[
    F(x) := \Big\{(f_1,\ldots,f_M) \in \R^M : \sum_{j=1}^M A_{ij}(x) f_j = b_i(x),\, i = 1,\ldots, N \Big\}, \quad x \in X.  
\]
We assume that the system \eqref{eq:system} has a solution that is $\om$-H\"older. In particular, $F(x)$ is a non-empty 
affine subspace of $\R^M$ for each $x \in X$. 
Since the functions $A_{ij}$ and $b_i$ are assumed definable on $X$, we see that 
$F : X \to \cA_M(\R^M)$ is a definable map. 
Now \Cref{thm:B} (applied to $\cM = X$ with the metric $\rh(x,y) = \om(\|x -y\|)$) 
shows that the system \eqref{eq:system} has a definable $\om$-H\"older solution.
Also the statement \eqref{eq:thmC} on the $\om$-H\"older seminorms is immediate from \Cref{thm:B}.

\section{Definable $C^{1,\om}$ extension of functions} \label{sec:extension}

We will now work towards the proof of \Cref{thm:A}. 
First, we will establish a correspondence between definable Whitney jets of class $C^{1,\om}$ 
and definable Lipschitz selections of a special affine-set valued map.
It will allow us to bring to bear \Cref{thm:BS23def}.

\subsection{Definable Whitney jets vs.\ definable Lipschitz selections}

Let $\om$ be a modulus of continuity.  
We will henceforth assume 
that
\begin{equation} \label{eq:omless1}
    \om \le 1.
\end{equation}
This is no restriction regarding \Cref{thm:A}, since replacing $\om$ by $\ol \om := \min\{1,\om\}$ 
in \eqref{eq:Cmomnorm} gives equivalent norms:
\[
    \|f\|_{C^{m,\om}(\R^n)} \le \|f\|_{C^{m,\ol \om}(\R^n)}\le 3\, \|f\|_{C^{m,\om}(\R^n)}.
\]
The first inequality is immediate from $\ol \om \le \om$.
For the second inequality, it is enough to consider $m=0$ and 
show that $|f|_{C^{0,\ol \om}(\R^n)} \le 2 \, \|f\|_{C^{0,\om}(\R^n)}$. 
To this end let $t_0 := \inf \{t>0 : \om(t)\ge 1 \}$. If $\|x-y\| < t_0$, then 
\[
    |f(x)-f(y)| \le |f|_{C^{0,\om}(\R^n)}\, \om(\|x-y\|) = |f|_{C^{0,\om}(\R^n)}\, \ol \om(\|x-y\|),
\]
and, if $\|x-y\|\ge t_0$, 
\[
    \frac{|f(x)-f(y)|}{\ol \om(\|x-y\|)}= |f(x)-f(y)| \le 2 \sup_{x \in \R^n} |f(x)|,
\]
which implies the assertion.

Let $X$ be a closed definable subset of $\R^n$. 
Set 
\[
    \cM_X := \{(x,y) \in X \times X : x \ne y\} 
\]
and endow it with the metric $\rh_\om$ defined by 
\[
    \rh_\om((x,y),(x',y')) := \om(\|x-y\|) + \om(\|x'-y'\|) + \om(\|x-x'\|), 
\]
if $(x,y)\ne (x',y')$, and $0$ otherwise. (At this stage, we do not assume that $\om$ is definable, 
but eventually we will.)

Let $f : X \to \R$ be a definable function.
Consider the definable map $L_f : \cM_X \to \cA_{n-1}(\R^n)$ defined by 
\[
    L_f(x,y):= \{z \in \R^n : \langle z , x-y \rangle  = f(x)-f(y)\}. 
\]
Let us show that $f$ can be completed to a definable Whitney jet of class 
$C^{1,\om}$ on $X$ if and only if $L_f$ has a definable Lipschitz selection. 

Recall that $(f,g)$ is a \emph{definable Whitney jet of class $C^{1,\om}$ on $X$} if 
$f : X \to \R$ and $g : X \to \R^n$ are definable maps and
\begin{align*}
    \|(f,g)\|_{X,1,\om} &:= \sup_{x \in X} |f(x)| + \sup_{x \in X} \|g(x)\| + |(f,g)|_{X,1,\om} < \infty,
    \intertext{where}
    |(f,g)|_{X,1,\om} &:=  \sup_{\substack{x, y \in X \\ x \ne y}} \frac{|f(x)-f(y)- \langle g(y), x -y \rangle |}{\|x-y\| \, \om(\|x-y\|)}
    +  \sup_{\substack{x, y \in X \\ x \ne y}} \frac{\|g(x) - g(y)\|}{\om(\|x-y\|)}. 
\end{align*}

\begin{proposition} \label{prop:BS31def}
    Let $X \subseteq \R^n$ be a closed definable set and let $f : X \to \R$ be a bounded definable function. 
    Then the following conditions are equivalent:
    \begin{enumerate}
        \item There exists a bounded definable function $g : X \to \R^n$ such that 
            $(f,g)$ is a definable Whitney jet of class $C^{1,\om}$ on $X$.
        \item There exists a bounded definable Lipschitz selection $\ell : \cM_X \to \R^n$ of $L_f$. 
    \end{enumerate}
    If the equivalent conditions \thetag{1} and \thetag{2} hold, then
    \begin{equation} \label{eq:equivnorms2}
        \inf_g \|(f,g)\|_{X,1,\om}  \approx \sup_{x \in X} |f(x)| + \inf_\ell \Big\{ \sup_{(x,y) \in \cM_X} \|\ell(x,y)\| 
        + |\ell|_{\on{Lip}(\cM_X,\R^n)} \Big\},
    \end{equation} 
    where the infimum on the left-hand side is taken over all $g$ satisfying \thetag{1} and 
    the infimum on the right-hand side over all $\ell$ satisfying \thetag{2}.
\end{proposition}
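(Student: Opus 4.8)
The statement is an ``if and only if'' together with the quantitative equivalence \eqref{eq:equivnorms2}, so I would prove the two implications $\thetag{1}\Rightarrow\thetag{2}$ and $\thetag{2}\Rightarrow\thetag{1}$ separately, keeping track of (semi)norms at each step; the equivalence \eqref{eq:equivnorms2} will then fall out of the constructions, since both will be explicit and definable.

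\emph{From \thetag{1} to \thetag{2}.} Given a definable Whitney jet $(f,g)$ of class $C^{1,\om}$ on $X$, I would simply set $\ell(x,y) := g(y)$. This is manifestly definable (the graph is a definable subset of $\cM_X \times \R^n$ because $g$ is definable), it is bounded by $\sup_{x\in X}\|g(x)\|$, and by the first supremum in $|(f,g)|_{X,1,\om}$ we have $|\langle g(y), x-y\rangle - (f(x)-f(y))| \le |(f,g)|_{X,1,\om}\,\|x-y\|\,\om(\|x-y\|)$; I would rather use the symmetric alternative $\ell(x,y):=\tfrac12(g(x)+g(y))$, or keep $g(y)$, and then check $\ell(x,y)\in L_f(x,y)$ only approximately --- but $L_f(x,y)$ is an affine hyperplane, not a neighborhood, so this does \emph{not} give a selection on the nose. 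The fix is standard: one perturbs $g(y)$ within $\R^n$ by a vector of length $O(\om(\|x-y\|))$ in the direction $\tfrac{x-y}{\|x-y\|}$ to land exactly on $L_f(x,y)$; concretely, if $\langle g(y), x-y\rangle - (f(x)-f(y)) = \delta$ then $\ell(x,y) := g(y) - \tfrac{\delta}{\|x-y\|^2}(x-y)$ lies in $L_f(x,y)$, is definable, and satisfies $\|\ell(x,y)-g(y)\| \le |(f,g)|_{X,1,\om}\,\om(\|x-y\|) \le |(f,g)|_{X,1,\om}$ using \eqref{eq:omless1}; hence $\ell$ is bounded and one estimates $\|\ell(x,y)-\ell(x',y')\|$ against $\rh_\om$ using the Hölder bound on $g$ plus the correction terms, each of which is controlled by one of the three summands of $\rh_\om$. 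The hard part here is bookkeeping the three cases in the triangle inequality for $\rh_\om$ so that $|\ell|_{\on{Lip}(\cM_X,\R^n)} \le C\,\|(f,g)\|_{X,1,\om}$; it is routine but must be done carefully.

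\emph{From \thetag{2} to \thetag{1}.} This is the substantive direction. Given a bounded definable Lipschitz selection $\ell$ of $L_f$, one must produce a definable $g:X\to\R^n$ with $(f,g)$ a Whitney jet. The natural guess is to ``diagonalize'': $g(x) := \lim_{y\to x}\ell(x,y)$ or a suitable definable average. By o-minimality (curve selection / monotonicity, applied to the definable function $y\mapsto \ell(x,y)$), for each fixed $x$ the limit along a definable curve exists; one defines $g(x)$ as this limit, and definability of $g$ follows because the whole construction is uniform in $x$ and carried out inside the fixed o-minimal structure. One then checks: (a) $\langle g(x), x-y\rangle$ compared to $f(x)-f(y)$ — since $\ell(x,y)\in L_f(x,y)$ means $\langle \ell(x,y), x-y\rangle = f(x)-f(y)$ exactly, and $\|g(x)-\ell(x,y)\|$ is $O(\om(\|x-y\|))$ once one knows $\ell(x,y')\to g(x)$ with a Hölder rate in $\|y-y'\|$ coming from $|\ell|_{\on{Lip}}$ and the $\om(\|x-x'\|)$-type term of $\rh_\om$; this gives the first supremum in $|(f,g)|_{X,1,\om}$; (b) $\|g(x)-g(x')\| \le \|g(x)-\ell(x,y)\| + \|\ell(x,y)-\ell(x',y)\| + \|\ell(x',y)-g(x')\|$ for a well-chosen $y$ (e.g. $\|x-y\|\approx\|x-x'\|$), each term $O(\om(\|x-x'\|))$ by $|\ell|_{\on{Lip}}$ and the definition of $\rh_\om$, giving the second supremum. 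The main obstacle is the first one: ensuring that the convergence $\ell(x,y)\to g(x)$ is quantitative with the right modulus, which forces one to exploit not just Lipschitzness of $\ell$ but the specific form of $\rh_\om$ — in particular the $\om(\|x-y\|)+\om(\|x'-y'\|)$ terms, which let one compare $\ell(x,y)$ and $\ell(x,y')$ at cost $\om(\|x-y\|)+\om(\|x-y'\|)$. Here again one uses $\om\le 1$ from \eqref{eq:omless1} and concavity/monotonicity of $\om$ to absorb constants. Finally, boundedness of $g$ follows from boundedness of $\ell$ together with these estimates (taking $y$ at bounded distance), and \eqref{eq:equivnorms2} is read off by comparing the constants produced in the two directions, noting that $\sup_{x\in X}|f(x)|$ appears on both sides and that the correction and averaging operations are norm-controlled in both directions; I would note that this argument is the definable refinement of \cite[Theorem 3.1 / proof thereof]{Brudnyi:2001aa}, so I would cite that and emphasize only the points where definability (via o-minimal cell decomposition and definable choice) replaces the classical limiting arguments.
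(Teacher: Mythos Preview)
Your plan is correct and matches the paper's proof almost exactly: the paper also takes $\ell(x,y)$ to be the orthogonal projection of $g$ onto the hyperplane $L_f(x,y)$, and in the reverse direction also defines $g(x)$ as the limit $\lim_{y\to x}\ell(x,y)$, with the same chain of triangle-inequality estimates you sketch.

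Three small corrections are worth making. First, in your perturbation formula the denominator must be $\|x-y\|_2^2$, since $\langle x-y,x-y\rangle=\|x-y\|_2^2$ and not the max-norm squared; as written your $\ell(x,y)$ does not lie on $L_f(x,y)$. Second, the paper projects $g(x)$ rather than $g(y)$ onto $L_f(x,y)$: this is not merely cosmetic, because $\rho_\om$ is asymmetric in the two coordinates (it contains $\om(\|x-x'\|)$ but not $\om(\|y-y'\|)$), so with the paper's choice the Lipschitz bound
\[
\|\ell(x,y)-\ell(x',y')\|\le\|\ell(x,y)-g(x)\|+\|g(x)-g(x')\|+\|g(x')-\ell(x',y')\|
\]
matches $\rho_\om$ term by term with no further work, whereas your choice requires an extra appeal to subadditivity of $\om$ to bound $\om(\|y-y'\|)$ by $\rho_\om$. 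Third, in $(2)\Rightarrow(1)$ you must treat isolated points of $X$ separately, since for such $x$ there is no $y\in X$ with $y\to x$ and hence no limit to take; the paper sets $g(x):=\ell(x,\hat x)$ for $\hat x$ a nearest point of $X\setminus\{x\}$ (chosen definably), and uses $\|x-\hat x\|\le\|x-x'\|$ for any $x'\in X\setminus\{x\}$ in the subsequent estimates.
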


Note that ``$\approx$'' in \eqref{eq:equivnorms2} means that any of the two sides is 
bounded by the other side up to a multiplicative factor which depends only on $n$.

\begin{proof}
    (1) $\Rightarrow$ (2)
    Let $g$ be as in (1) so that $(f,g)$ is a definable Whitney jet of class $C^{1,\om}$ on $X$.
    For $(x,y) \in \cM_X$ let $\ell(x,y)$ be the point in $L_f(x,y)$ closest to $g(x)$ (with respect to the Euclidean metric), i.e.,
    \begin{equation}
        \|g(x) - \ell(x,y)\|_2  = \on{dist}_2(g(x),L_f(x,y)).   
    \end{equation} 
    Then $\ell : \cM_X \to \R^n$ is a definable selection of $L_f$. Let us check that it is bounded and Lipschitz. 
    We have 
    \begin{align*}
        \|g(x) - \ell(x,y)\|_2 = \frac{|f(x) - f(y) - \langle g(x), x-y  \rangle |}{\|x-y\|_2}
    \end{align*}
    which is bounded by 
    \begin{align*}
        \|(f,g)\|_{X,1,\om}  \frac{\|x-y\|\, \om(\|x-y\|)}{\|x-y\|_2} \le  \|(f,g)\|_{X,1,\om},
    \end{align*}
    in view of $\om \le 1$ (cf.\ \eqref{eq:omless1}). 
    Thus 
    \[
        \|\ell(x,y)\| \le \|g(x)\|+ \|g(x) - \ell(x,y)\| \le 2\, \|(f,g)\|_{X,1,\om},
    \]
    i.e., $\ell$ is bounded with $\sup_{(x,y) \in \cM_X} \|\ell(x,y)\| \le 2\, \|(f,g)\|_{X,1,\om}$.

    For $(x,y), (x',y') \in \cM_X$, we have
    \begin{align*}
        \|\ell(x,y) - \ell(x',y')\| 
        &\le \|\ell(x,y) - g(x)\| + \|g(x)-g(x')\| +  \|g(x') - \ell(x',y')\|
        \\
        &\le \|(f,g)\|_{X,1,\om} \big(\om(\|x-y\|) + \om(\|x-x'\|)+ \om(\|x'-y'\|) \big)
        \\
        &=  \|(f,g)\|_{X,1,\om}\, \rh_\om((x,y), (x',y')),  
    \end{align*}
    consequently, $|\ell|_{\on{Lip}(\cM_X,\R^n)} \le \|(f,g)\|_{X,1,\om}$.

    (2) $\Rightarrow$ (1)
    Assume that $L_f$ has a bounded definable Lipschitz selection 
    $\ell : \cM_X \to \R^n$. 
    For ease of notation, set
    \[
        C_\ell:= \sup_{(x,y) \in \cM_X} \|\ell(x,y)\| 
        + |\ell|_{\on{Lip}(\cM_X,\R^n)}.
    \]
    If $x \in X$ is an isolated point of $X$, let $\hat x$ be a closest point in $X \setminus \{x\}$ (with respect to $\|\cdot\|$). 
    Otherwise, let $\hat x := x$. The point $\hat x$ can be assigned to $x$ in a definable way. 
    Define $g : X \to \R^n$ by 
    \[
        g(x) :=
        \begin{cases}
            \ell(x,\hat x) & \text{ if } x \text{ is an isolated point of } X,\\
            \lim_{X \ni y \to x} \ell(x,y) & \text{ otherwise. } 
        \end{cases}
    \]
    Then $g$ is definable and $\|g(x)\|\le C_\ell$ for all $x \in X$. The limit exists, since 
    \[
        \|\ell(x,y) - \ell(x,y')\|\le C_\ell\, \rh_\om((x,y),(x,y')) =C_\ell \big( \om(\|x-y\|) + \om(\|x-y'\|) \big).
    \]

    Let us check that $(f,g)$ is a Whitney jet of class $C^{1,\om}$ on $X$. 
    For each $x \in X$ let $(x_i)$ be a sequence in $X\setminus \{x\}$ (possibly stationary) such that $x_i \to \hat x$ and $\ell(x,x_i) \to g(x)$. 
    Then 
    \begin{align*}
        \|g(x) - g(x')\| &= \lim_{i\to \infty} \|\ell(x,x_i) - \ell(x',x'_i)\| 
        \\
                         &\le C_\ell \lim_{i\to \infty} \rh_\om((x,x_i),(x',x'_i))      
                         \\
                         &\le C_\ell \big( \om(\|x-\hat x\|) + \om(\|x'-\hat x'\|) + \om(\|x-x'\|) \big) \le 3C_\ell\, \om(\|x-x'\|).
    \end{align*}    
    Since $\ell(x,y) \in L_f(x,y)$, we have  
    \begin{align*}
        |f(x) - f(y) - \langle g(x) , x-y \rangle |
    &= 
    |\langle \ell(x,y) - g(x) , x-y \rangle |
    \\
    &= 
    \lim_{i \to \infty}
    |\langle \ell(x,y) - \ell(x,x_i) , x-y \rangle |
    \\
    &\le
    n\, \|x-y\| \limsup_{i \to \infty} \|\ell(x,y) - \ell(x,x_i)\| 
    \\
    &\le
    n\,C_\ell\, \|x-y\| \limsup_{i \to \infty} \rh_\om((x,y),(x,x_i))
    \\
    &\le n\,C_\ell\, \|x-y\|  \big( \om(\|x-y\|) + \om(\|x-\hat x\|) \big)
    \\
    &\le 2n\,C_\ell\, \|x-y\|\, \om(\|x-y\|).
    \end{align*}
    Thus $|(f,g)|_{X,1,\om} \le 2n\, C_\ell$.
\end{proof}

By a little trick, the boundedness condition for $\ell$ in \Cref{prop:BS31def} 
may be absorbed by the Lipschitz condition. 
To this end,
we add a point $*$ to $\cM_X$. 
We define $\widetilde \cM_X := \cM_X \cup \{*\}$ and extend the metric to $\widetilde \cM_X$  by setting 
$\widetilde \rh_\om|_{\cM_X \times \cM_X} := \rh_\om$ and $\widetilde \rh_\om ((x,y),*) = \widetilde \rh_\om (*,(x,y)) := 2$
for all $(x,y) \in \cM_X$ 
as well as $\widetilde \rh_\om (*,*) := 0$. 
(Since $\om\le 1$, the triangle inequality holds: for $m,m' \in \cM_X$ we have $\rh_\om(m,m') \le 3 \le 2 + 2 = \rh_\om(m,*) + \rh_\om(*,m')$.)

The map $L_f$ is extended to $\widetilde L_f : \widetilde \cM_X \to \cA_{n-1}(\R^n)$ by setting $\widetilde L_f|_{\cM_X} = L_f$ and 
$\widetilde L_f(*):= \{0\}$. 

\begin{proposition} \label{prop:BS32def} 
    Under the assumptions of \Cref{prop:BS31def}, items \thetag{1} and \thetag{2} in that proposition are further equivalent to 
    \begin{enumerate}
        \item[(3)]   There exists a definable Lipschitz selection $\widetilde \ell : \widetilde \cM_X \to \R^n$ of $\widetilde L_f$. 
    \end{enumerate}
    If the equivalent conditions \thetag{1}, \thetag{2}, and \thetag{3} hold, then we also have 
    \begin{equation}
        \inf_g \| (f,g) \|_{X,1,\om} \approx  \sup_{x \in X} |f(x)| + \inf_{\widetilde \ell}  |\widetilde \ell|_{\on{Lip}(\widetilde \cM_X,\R^n)},
    \end{equation}
    where the infimum on the left-hand side is taken over all $g$ satisfying \thetag{1} and 
    the infimum on the right-hand side over all $\widetilde \ell$ satisfying \thetag{3}.
\end{proposition}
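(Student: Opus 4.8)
The plan is to show that \thetag{3} is equivalent to the pair \thetag{1}, \thetag{2} by proving \thetag{2} $\Leftrightarrow$ \thetag{3}, and tracking the seminorms so as to get the displayed equivalence. The essential point is the little trick already described before the statement: the added point $*$ has $\widetilde L_f(*) = \{0\}$ and $\widetilde\rh_\om$-distance exactly $2$ to every point of $\cM_X$, so a selection $\widetilde\ell$ of $\widetilde L_f$ is the same data as a selection $\ell$ of $L_f$ together with the single constraint coming from the vertex $*$; and that constraint is precisely a uniform bound on $\|\ell\|$.

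\textbf{From \thetag{2} to \thetag{3}.} Suppose $\ell : \cM_X \to \R^n$ is a bounded definable Lipschitz selection of $L_f$ with $C_\ell := \sup_{(x,y)\in\cM_X}\|\ell(x,y)\| + |\ell|_{\on{Lip}(\cM_X,\R^n)}$. Define $\widetilde\ell : \widetilde\cM_X \to \R^n$ by $\widetilde\ell|_{\cM_X} := \ell$ and $\widetilde\ell(*) := 0 \in \widetilde L_f(*)$. This is visibly a definable selection of $\widetilde L_f$ (the graph is the old graph together with $\{(*,0)\}$, still definable). For the Lipschitz estimate, on pairs in $\cM_X$ it is unchanged, while for $(x,y)\in\cM_X$ we have
\[
    \|\widetilde\ell(x,y) - \widetilde\ell(*)\| = \|\ell(x,y)\| \le \tfrac{C_\ell}{2}\cdot 2 = \tfrac{C_\ell}{2}\, \widetilde\rh_\om((x,y),*),
\]
so $|\widetilde\ell|_{\on{Lip}(\widetilde\cM_X,\R^n)} \le C_\ell$. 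Combined with \Cref{prop:BS31def} this gives $\inf_{\widetilde\ell}|\widetilde\ell|_{\on{Lip}(\widetilde\cM_X,\R^n)} \lesssim \inf_\ell\{\sup\|\ell\| + |\ell|_{\on{Lip}}\} \lesssim \inf_g\|(f,g)\|_{X,1,\om}$, up to the constant $\sup_x|f(x)|$ appearing on the right of the asserted equivalence.

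\textbf{From \thetag{3} to \thetag{2}.} Conversely, let $\widetilde\ell : \widetilde\cM_X \to \R^n$ be a definable Lipschitz selection of $\widetilde L_f$ with seminorm $L := |\widetilde\ell|_{\on{Lip}(\widetilde\cM_X,\R^n)}$. Put $\ell := \widetilde\ell|_{\cM_X}$ and $p := \widetilde\ell(*)$; since $\widetilde L_f(*) = \{0\}$ we have $p = 0$. Then $\ell$ is a definable selection of $L_f$, clearly Lipschitz with $|\ell|_{\on{Lip}(\cM_X,\R^n)} \le L$, and bounded, because for every $(x,y)\in\cM_X$
\[
    \|\ell(x,y)\| = \|\widetilde\ell(x,y) - \widetilde\ell(*)\| \le L\, \widetilde\rh_\om((x,y),*) = 2L.
\]
Hence $C_\ell \le 3L$, so $\ell$ satisfies \thetag{2} and, via \Cref{prop:BS31def}, one recovers a $g$ satisfying \thetag{1} with $\|(f,g)\|_{X,1,\om}$ controlled (up to a factor depending only on $n$) by $\sup_x|f(x)| + L$. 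Taking infima over $\widetilde\ell$ closes the chain of equivalences and yields the displayed $\approx$.

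\textbf{Main obstacle.} There is no serious obstacle here — this is a bookkeeping argument, and the only things to be careful about are: (i) checking that $\widetilde\rh_\om$ really is a metric on $\widetilde\cM_X$, which is exactly the triangle-inequality remark $\rh_\om(m,m')\le 3\le 2+2$ already recorded in the text and which uses $\om\le1$; (ii) verifying definability of $\widetilde\cM_X$, $\widetilde\rh_\om$, and $\widetilde L_f$ as subsets of the appropriate $\R^N$ and $\R^N\times\R^n$ — one realises $*$ as a chosen point of $\R^{2n}$ outside the diagonal-complement $\cM_X$ (e.g.\ a point on the diagonal), so adding it and defining the new distance and the new graph by a finite case split keeps everything definable; and (iii) the point $\widetilde L_f(*) = \{0\}$ forces $p = 0$, which is what makes the boundedness constraint clean — if instead $\widetilde L_f(*)$ were a larger set one would only get boundedness of $\ell$ relative to $p$. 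The seminorm tracking is then immediate from the two one-line estimates above together with \Cref{prop:BS31def}.
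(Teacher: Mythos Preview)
Your proof is correct and follows essentially the same approach as the paper: both directions are handled by extending or restricting the selection at the single added point $*$, using that $\widetilde L_f(*)=\{0\}$ forces $\widetilde\ell(*)=0$ and that $\widetilde\rh_\om((x,y),*)=2$ converts the Lipschitz condition at $*$ into the uniform bound on $\|\ell\|$. Your write-up is in fact a bit more explicit than the paper's in tracking the constants for the $\approx$ equivalence and in noting the definability and metric checks.
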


\begin{proof}
    To see that (3) implies (2),
    note that 
    \[
        \frac{\|\widetilde \ell(x,y)\|}{2} = \frac{\|\widetilde \ell(x,y) - \widetilde \ell(*)\|}{\widetilde \rh_\om((x,y),*)} \le |\widetilde \ell|_{\on{Lip}(\widetilde \cM_X,\R^n)}
    \]
    for all $(x,y) \in \cM_X$, if $\widetilde \ell$ is a Lipschitz selection of $\widetilde L_f$.  

    Conversely, if $\ell$ is a bounded definable Lipschitz selection of $L_f$, then 
    the unique extension $\widetilde \ell|_{\cM_X} := \ell$ and $\widetilde \ell(*) := 0$ is a definable Lipschitz selection of $\widetilde L_f$.
    Indeed, for $(x,y) \in \cM_X$,
    \[
        \| \widetilde \ell(x,y) - \widetilde \ell(*) \| = \| \ell(x,y) \| \le C = \frac{C}{2}\, \widetilde \rh_\om ((x,y),*), 
    \]
    where $C = \sup_{(x,y) \in \cM_X} \|\ell(x,y)\|$.
\end{proof}

Next we endow $\widetilde \cM_X$ with a weighted graph structure. Let 
$\widetilde \cM_X$ be the set of vertices of this graph. 
Two vertices $(x,y), (x',y') \in \cM_X$ are joined by an edge if $\{x,y\} \cap \{x',y'\} \ne \emptyset$, 
and $*$ is joined by an edge to every $(x,y) \in \cM_X$. 
If $m,m' \in \widetilde \cM_X$ are joined by an edge, then we assign the weight 
\[
    w(m,m') := 
    \begin{cases}
        \om(\|x-y\|) + \om(\|x'-y'\|) & \text{ if } m = (x,y),\, m' = (x',y') \in \cM_X,
        \\
        2 & \text{ otherwise. }
    \end{cases}
\] 
Let $\si$ be the associated (extended) pseudometric; cf.\ \eqref{eq:definitionrho}.
By \cite[Proposition 3.3]{Brudnyi:2001aa} 
(and thanks to our general assumption $\om \le 1$; cf.\ \eqref{eq:omless1}), we have 
\begin{equation} \label{eq:Ais2}
    \frac{1}{2} \widetilde \rh_\om \le \si \le 2 \widetilde \rh_\om.   
\end{equation}

\begin{corollary} \label{cor:MXdwgraph}
    If $\om$ is definable, then $(\widetilde M_X,\widetilde \rh_\om)$ is a definable metric space
    and, equipped with the above weighted graph structure, it is a definable weighted graph.
\end{corollary}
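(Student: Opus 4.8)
The plan is to verify each part of \Cref{cor:MXdwgraph} by unwinding the definitions established just above the statement, using the o-minimal closure properties recalled in \Cref{ssec:o-minimal} and the inequality \eqref{eq:Ais2}.

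First I would check that $(\widetilde\cM_X,\widetilde\rh_\om)$ is a definable metric space. The set $\cM_X = \{(x,y)\in X\times X : x\ne y\}$ is definable, being a boolean combination of the definable sets $X\times X$ and the diagonal; adjoining the extra point $*$ (realized, say, as a fixed point of $\R^n\times\R^n$ not lying in $\cM_X$, or by passing to $\R^{2n+1}$ with $*$ placed in the new coordinate) keeps the set definable. The pseudometric $\widetilde\rh_\om$ restricted to $\cM_X\times\cM_X$ is the function $((x,y),(x',y'))\mapsto \om(\|x-y\|)+\om(\|x'-y'\|)+\om(\|x-x'\|)$, which is definable because $\om$ is definable by hypothesis, $\|\cdot\|$ is semialgebraic, and definable functions are closed under composition and sums; on the remaining pairs involving $*$ it takes the constant values $2$ and $0$, so it is definable there too. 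Thus $\widetilde\rh_\om$ is a definable function, and it was already observed (in the parenthetical remark before \eqref{eq:Ais2}, using $\om\le1$) that it satisfies the triangle inequality, so $(\widetilde\cM_X,\widetilde\rh_\om)$ is a definable metric space.

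Next I would verify the weighted-graph structure. The vertex set $\widetilde\cM_X$ is definable by the above. The edge set is definable: two vertices $(x,y),(x',y')\in\cM_X$ are joined iff $\{x,y\}\cap\{x',y'\}\ne\emptyset$, which is the definable condition $x=x'\lor x=y'\lor y=x'\lor y=y'$, and the edges incident to $*$ form the definable set $\{*\}\times\cM_X\cup\cM_X\times\{*\}$; the union is definable and symmetric. The weight $w$ is a definable function on the edge set: on pairs of vertices in $\cM_X$ it equals $\om(\|x-y\|)+\om(\|x'-y'\|)$, definable as before, and on the remaining edges it is the constant $2$. Finally, \eqref{eq:Ais2} says $\tfrac12\widetilde\rh_\om\le\si\le2\widetilde\rh_\om$, which is exactly the requirement \eqref{eq:dwgraph} with the definable pseudometric $\rh:=\widetilde\rh_\om$ and the absolute constant $A=2$. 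Hence all the defining conditions of a definable weighted graph are met.

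There is no genuine obstacle here; the statement is essentially a bookkeeping corollary, and the only point requiring a little care is the realization of the added point $*$ as a genuine element of some $\R^N$ so that $\widetilde\cM_X$ is literally a definable subset of Euclidean space — but this is routine, since one may take $\widetilde\cM_X\subseteq\R^{2n}\times\{0,1\}$ with $\cM_X$ sitting in the slice labelled $0$ and $*=(0,1)$, and all the functions above extend definably to this set. Everything else is an immediate application of the o-minimal closure properties together with the already-established estimate \eqref{eq:Ais2}.
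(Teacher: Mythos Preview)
Your proposal is correct and takes essentially the same approach as the paper, which simply records that the corollary ``follows from the definitions and \eqref{eq:Ais2}.'' You have merely unpacked that one-line proof in detail, including the (appropriate) care about realizing $*$ in some $\R^N$, which the paper leaves implicit.
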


\begin{proof}
    This follows from the definitions and \eqref{eq:Ais2}.  
\end{proof}

We recall a consequence of \cite[Proposition 3.2 and Corollary 3.4]{Brudnyi:2001aa} 
(which is an analogue of \Cref{prop:BS32def}, where sets and maps are not 
necessarily definable).

\begin{proposition}[{\cite[Proposition 3.5]{Brudnyi:2001aa}}] \label{prop:BS35def} 
    Let $m \ge 1$ be an integer.
    Let $X \subseteq \R^n$ and $f : X \to \R$ be given (not necessarily definable).
    Assume that
    the restriction $f|_Y$ to every subset $Y \subseteq X$ of cardinality $\# Y\le m$ has an 
    extension $F_Y\in C^{1,\om}(\R^n)$ with $\|F_Y\|_{C^{1,\om}(\R^n)} \le 1$.
    If $\cN$ is an admissible subset of $\widetilde \cM_X$ 
    of cardinality $\# \cN \le \frac{2}{3} m$, 
    then $\widetilde L_f|_\cN$ has a Lipschitz selection $\widetilde \ell_{\cN} : \cN \to \R^n$
    with $|\widetilde \ell_{\cN}|_{\on{Lip}(\cN,\R^n)} \le C(n)$.
\end{proposition}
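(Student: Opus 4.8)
The plan is to extract \Cref{prop:BS35def} from the original Brudnyi--Shvartsman result by carefully matching cardinalities. First I would recall the structure of the set $\widetilde\cM_X$ and its weighted graph: a vertex of $\cM_X$ is a pair $(x,y)$ of distinct points of $X$, so it "involves" two points of $X$, while the extra vertex $*$ involves none. Given an admissible subset $\cN\subseteq\widetilde\cM_X$ with $\#\cN\le \tfrac23 m$, I would set $Y := \bigcup\{\{x,y\} : (x,y)\in\cN\cap\cM_X\}\subseteq X$, the set of all coordinates appearing among the vertices of $\cN$. Then $\#Y \le 2\#\cN \le \tfrac43 m$ — but this is too crude; the point of admissibility is that one can do better, or rather that one does not need to, because the hypothesis of \Cref{prop:BS35def} only asks for extensions on subsets of $X$ of size $\le m$, and we will only feed it subsets of size $\le \tfrac43 m$. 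Wait — that does not match. So the right reading is: the cited \cite[Proposition 3.5]{Brudnyi:2001aa} is stated with exactly this constant $\tfrac23$ precisely so that $\#Y\le 2\cdot\tfrac23 m = \tfrac43 m \le \ldots$; I would instead recall that in \cite{Brudnyi:2001aa} the finiteness principle for $C^{1,\om}$ uses test sets of size $3\cdot 2^{n-1}$ and the selection principle uses test sets of size $2^{n}$ for $\cA_{n-1}$-valued maps, and $3\cdot 2^{n-1} = \tfrac32\cdot 2^n$, i.e. the $\tfrac23$ is exactly the ratio between these two combinatorial thresholds. So the correct move is simply to cite the three results \cite[Proposition 3.2, Corollary 3.4, Proposition 3.5]{Brudnyi:2001aa} and observe that none of their hypotheses or conclusions involve definability; the statement we want is verbatim \cite[Proposition 3.5]{Brudnyi:2001aa}.

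Concretely, the proof I would write is essentially a pointer to the literature together with a sanity check. First I would invoke \cite[Proposition 3.2]{Brudnyi:2001aa}, which produces, from the assumption that $f|_Y$ extends to a unit-norm $C^{1,\om}(\R^n)$ function for every $Y\subseteq X$ with $\#Y\le m$, a Lipschitz selection of $L_f$ (not $\widetilde L_f$) on every subset of $\cM_X$ of suitably bounded cardinality, with Lipschitz constant depending only on $n$. Then \cite[Corollary 3.4]{Brudnyi:2001aa} passes from $L_f$ on $\cM_X$ to $\widetilde L_f$ on $\widetilde\cM_X$, using the added point $*$ with $\widetilde L_f(*)=\{0\}$ exactly as in \Cref{prop:BS32def}; the constant $2$ appearing in $\widetilde\rh_\om$ and in \eqref{eq:Ais2} is what makes the extension to $*$ Lipschitz. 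Finally \cite[Proposition 3.5]{Brudnyi:2001aa} combines these, tracking that an admissible $\cN\subseteq\widetilde\cM_X$ with $\#\cN\le\tfrac23 m$ only references a subset $Y\subseteq X$ small enough to apply the hypothesis, and that admissibility of $\cN$ is exactly the combinatorial condition under which the finite-scale selection argument of \cite{Brudnyi:2001aa} runs. I would then remark that all the objects here — $\widetilde\cM_X$, $\widetilde\rh_\om$, $\widetilde L_f$, the finite sets $\cN$ — are the same whether or not $f$ is definable, so the non-definable statement of \cite{Brudnyi:2001aa} applies without change, which is precisely what \Cref{prop:BS35def} asserts.

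The key steps, in order, are: (i) recall that the hypothesis is phrased with test sets in $X$ of size $\le m$ and note $\#Y\le 2\#\cN\le\tfrac43 m$, so one must check $\tfrac43 m\le$ the threshold actually used in \cite[Prop.~3.2]{Brudnyi:2001aa} — here I would double-check against \cite{Brudnyi:2001aa} that the stated constant $\tfrac23$ is the one that makes this work, since it is the only genuinely arithmetic point; (ii) apply \cite[Proposition 3.2]{Brudnyi:2001aa} to get selections of $L_f$ on finite subsets of $\cM_X$; (iii) apply \cite[Corollary 3.4]{Brudnyi:2001aa} (equivalently, redo the one-line argument of \Cref{prop:BS32def} in the finite-set setting) to pass to $\widetilde L_f$ and $\widetilde\cM_X$; (iv) observe admissibility of $\cN$ is the hypothesis needed and that definability plays no role. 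The main obstacle — such as it is — is purely bookkeeping: making sure the cardinality bound $\#\cN\le\tfrac23 m$ is propagated correctly through the passage $\cN\rightsquigarrow Y$ and through the doubling construction, and that the constant $C(n)$ is genuinely independent of $X$, $f$, and $\cN$. Since \cite{Brudnyi:2001aa} already carries out all of this, the honest proof is a citation, and I would write it as such: "This is \cite[Proposition 3.5]{Brudnyi:2001aa}; note that the cited statement makes no definability assumptions, and the constant $\tfrac23$ in the cardinality bound reflects the ratio $3\cdot 2^{n-1}/2^n$ between the finiteness thresholds for the $C^{1,\om}$-extension and the Lipschitz-selection problems."
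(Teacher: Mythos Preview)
Your proposal is correct and matches the paper's treatment exactly: the paper gives no proof of \Cref{prop:BS35def} but simply recalls it from the literature as \cite[Proposition 3.5]{Brudnyi:2001aa} (noted there as a consequence of \cite[Proposition 3.2 and Corollary 3.4]{Brudnyi:2001aa}), observing that definability plays no role. Your exploratory remarks about the constant $\tfrac23$ are a bit tangled --- the crude bound $\#Y\le 2\#\cN$ does not suffice, and admissibility is what sharpens it --- but since your bottom line is the citation, this does not affect correctness.
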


We are ready for the goal of this section: 

\begin{theorem} \label{thm:BS13defjet}
    Let $\om$ be a definable modulus of continuity.  
    Let $X \subseteq \R^n$ be a closed definable set and $f : X \to \R$ a definable function.
    Assume that
    the restriction $f|_Y$ to every subset $Y \subseteq X$ of cardinality $\# Y\le 3 \cdot 2^{n-1}$ has an 
    extension $F_Y\in C^{1,\om}(\R^n)$ with $\|F_Y\|_{C^{1,\om}(\R^n)} \le 1$.
    Then there exists a bounded definable function $g : X \to \R^n$ such that $(f,g)$ is a definable Whitney jet of class $C^{1,\om}$ on $X$
    with
    $\|(f,g)\|_{X,1,\om} \le C(n)$. 
\end{theorem}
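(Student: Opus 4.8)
The plan is to combine the finiteness principle encoded in \Cref{prop:BS35def} with the definable selection result \Cref{thm:BS23def}, and then translate back via \Cref{prop:BS32def}. Concretely, set $m := 3\cdot 2^{n-1}$, so that the hypothesis says exactly that $f|_Y$ extends to some $F_Y\in C^{1,\om}(\R^n)$ with $\|F_Y\|_{C^{1,\om}(\R^n)}\le 1$ for every $Y\subseteq X$ with $\#Y\le m$. Note $\frac23 m = 2^n = 2^{(n-1)+1}$, which is precisely the cardinality bound appearing in \Cref{thm:BS23def} for maps into $\cA_{n-1}(\R^n)$. First I would invoke \Cref{cor:MXdwgraph} to see that $(\widetilde\cM_X,\widetilde\rh_\om)$, equipped with its weighted graph structure, is a definable weighted graph (with $A=2$ by \eqref{eq:Ais2}), and that $\widetilde L_f:\widetilde\cM_X\to\cA_{n-1}(\R^n)$ is a definable map.

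Next I would verify the hypothesis of \Cref{thm:BS23def} for $\widetilde L_f$ with $k=n-1$: for every admissible subset $\cN\subseteq\widetilde\cM_X$ with $\#\cN\le 2^n = \frac23 m$, \Cref{prop:BS35def} produces a Lipschitz selection $\widetilde\ell_\cN:\cN\to\R^n$ of $\widetilde L_f|_\cN$ with $|\widetilde\ell_\cN|_{\on{Lip}(\cN,\R^n)}\le C(n)$. Dividing $f$ by $C(n)$ (which rescales everything linearly and will be undone at the end) we may as well assume the bound is $\le 1$. Then \Cref{thm:BS23def} yields a definable Lipschitz selection $\widetilde\ell:\widetilde\cM_X\to\R^n$ of $\widetilde L_f$ with $|\widetilde\ell|_{\on{Lip}(\widetilde\cM_X,\R^n)}\le C(n-1,n,2) =: C'(n)$; this is condition (3) of \Cref{prop:BS32def}. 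Finally, \Cref{prop:BS32def} (via its equivalence with (1) and the norm comparison displayed there, together with $\sup_X|f|\le\|F_{\{x\}}\|_{C^{1,\om}(\R^n)}\le 1$) gives a bounded definable $g:X\to\R^n$ with $(f,g)$ a definable Whitney jet of class $C^{1,\om}$ on $X$ and $\|(f,g)\|_{X,1,\om}\le C(n)$. Unwinding the initial rescaling by $C(n)$ only changes the final constant, so the bound $\|(f,g)\|_{X,1,\om}\le C(n)$ persists.

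One bookkeeping point I would be careful about: \Cref{prop:BS35def} is stated for general (not necessarily definable) $X$ and $f$, so it applies verbatim here; only the final selection step requires definability, which is exactly what \Cref{thm:BS23def} and \Cref{prop:BS32def} supply. I should also double-check that the constant in \Cref{thm:BS23def} indeed depends only on $k$, $n$, and $A=2$, hence only on $n$ after absorbing $A$; this is asserted in the statement of \Cref{thm:BS23def}. The main (minor) obstacle is purely arithmetic—matching the cardinality $3\cdot 2^{n-1}$ in the hypothesis with the $\frac23 m$ threshold of \Cref{prop:BS35def} and the $2^{k+1}$ threshold of \Cref{thm:BS23def} with $k=n-1$—and keeping track of the constants through the rescaling. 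There is no genuine new difficulty: the theorem is essentially the concatenation of the three cited results, and the proof will be short.
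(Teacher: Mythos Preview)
Your proposal is correct and follows essentially the same route as the paper: apply \Cref{prop:BS35def} with $m=3\cdot 2^{n-1}$ to get Lipschitz selections on admissible subsets of cardinality $\le 2^n$, feed this into \Cref{thm:BS23def} (with $k=n-1$, $A=2$) using \Cref{cor:MXdwgraph}, and conclude via \Cref{prop:BS32def} together with $\sup_X|f|\le 1$ from the one-point case. The only cosmetic difference is that the paper simply carries the constant $C(n)$ from \Cref{prop:BS35def} through \Cref{thm:BS23def} (using its obvious homogeneity under scaling the metric), whereas you normalize by rescaling $f$; both are fine, though your phrasing ``dividing $f$ by $C(n)$'' relies on the linearity $L_{cf}=c\cdot L_f$ and might be more transparently stated as scaling the pseudometric.
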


\begin{proof}
    Applying the assumption to $1$-point sets $Y \subseteq X$, we may conclude that $\sup_{x \in X} |f(x)| \le 1$. 
    The assumption and \Cref{prop:BS35def} give that for every admissible subset $\cN \subseteq \widetilde \cM_X$ of cardinality 
    $\# \cN \le \frac{2}{3} \cdot 3 \cdot 2^{n-1} = 2^n$ 
    there exists a Lipschitz selection $\widetilde \ell_{\cN} : \cN \to \R^n$ of $\widetilde L_f|_\cN$ 
    with $|\widetilde \ell_{\cN}|_{\on{Lip}(\cN,\R^n)} \le C(n)$. 
    By \Cref{thm:BS23def} and \Cref{cor:MXdwgraph},  
    there exists a definable Lipschitz selection $\widetilde \ell : \widetilde \cM_X \to \R^n$ of $\widetilde L_f$ 
    with $|\widetilde \ell|_{\on{Lip}(\widetilde \cM_X,\R^n)} \le C_1(n)$; note that $k=n-1$ and $A = 2$ in this case. 
    Now the assertion follows from \Cref{prop:BS32def}.
\end{proof}

\subsection{Proof of \Cref{thm:A}}

We shall see that
the identity \eqref{eq:altequiv}
and the statement about the bounded subsets
follow from \Cref{thm:BS13defjet} and the uniform definable $C^{m,\om}$ 
Whitney extension theorem: 

\begin{theorem}[{\cite{ParusinskiRainer:2023aa}}] \label{thm:defjetext}
    Let $0 \le m\le p$ be integers. Let $\om$ be a modulus of continuity.
    Let $X\subseteq \R^n$ be a definable closed set.
    Any definable bounded family of Whitney jets of class $C^{m,\om}$ on $X$ extends to a definable bounded family of $C^{m,\om}$-functions on $\R^n$ 
    which are of class $C^p$ outside $X$.
\end{theorem}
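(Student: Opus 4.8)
The plan is to deduce \Cref{thm:defjetext} by carrying out the classical $C^{m,\om}$ Whitney extension construction inside the o-minimal structure and with estimates uniform in the family. Recall that a \emph{definable bounded family of Whitney jets of class $C^{m,\om}$ on $X$} is a definable map $T\times X\to\R^{\binom{n+m}{m}}$, $(t,x)\mapsto F_t(x)=(F_t^\al(x))_{|\al|\le m}$, with $T\subseteq\R^N$ definable, such that each $F_t$ is a $C^{m,\om}$ Whitney jet on $X$ and $\sup_{t\in T}\big(\sup_{x\in X}|F_t^0(x)|+|F_t|_{X,m,\om}\big)<\infty$; we want a definable map $T\times\R^n\to\R$, $(t,x)\mapsto\tilde F_t(x)$, with each $\tilde F_t\in C^{m,\om}(\R^n)$ extending $F_t$, of class $C^p$ on $\R^n\setminus X$, and with $\sup_t\|\tilde F_t\|_{C^{m,\om}(\R^n)}<\infty$. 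The crucial point is that $X$ is fixed, so the geometric part of the construction carries no parameter; $t$ enters only through the jet data. Hence it is enough to build $\tilde F$ from a single jet $F$ by a recipe that depends definably on $F$ and satisfies $\|\tilde F\|_{C^{m,\om}(\R^n)}\le C\,|F|_{X,m,\om}$ with $C$ depending only on $n$, $m$, $p$ and the o-minimal data of $X$; applying this with $F=F_t$ then yields the family.

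First I would replace the classical dyadic Whitney decomposition of $U:=\R^n\setminus X$ — which is not definable — by its o-minimal substitutes, obtained from definable $C^p$ cell decomposition (and, where convenient, rectilinearization/preparation). Namely: write $X$ as a finite disjoint union of definable $C^p$ submanifolds $\Ga_1,\dots,\Ga_s$ of $\R^n$; produce definable open sets $V_1,\dots,V_s$ covering a neighbourhood of $X$ together with definable $C^p$ retractions $\pi_i:V_i\to\Ga_i$ satisfying $\|\pi_i(x)-x\|\lesssim\on{dist}(x,X)$; produce a definable function $\rho:\R^n\to[0,\infty)$ with $\rho^{-1}(0)=X$ that is $C^p$ on $U$ with $\rho\approx\on{dist}(\cdot,X)$ and $|\p^\al\rho(x)|\lesssim\on{dist}(x,X)^{1-|\al|}$ for $1\le|\al|\le p$; and produce a finite definable partition of unity $\{\theta_0,\theta_1,\dots,\theta_s\}$ on $\R^n$, of class $C^p$ on $U$, with $\supp\theta_i\subseteq V_i$ for $i\ge1$, $\supp\theta_0\subseteq U$, $\sum_{i\ge1}\theta_i\equiv1$ on a punctured neighbourhood of $X$, and $|\p^\al\theta_i(x)|\lesssim\on{dist}(x,X)^{-|\al|}$ on $U$. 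The existence of such definable $C^p$ data, with these scale-sensitive derivative bounds, is essentially the o-minimal Whitney machinery of Kurdyka--Paw\l ucki and Thamrongthanyalak, adapted to track $C^p$-smoothness and the estimates; none of it depends on the jet.

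With this data fixed, define the extension by the Glaeser--Whitney formula made definable: set $\tilde F:=F^0$ on $X$ and, on $U$, $\tilde F(x):=\sum_{i=1}^s\theta_i(x)\,\big(T^m_{\pi_i(x)}F\big)(x)$, where $T^m_aF$ denotes the degree-$m$ Taylor polynomial of the jet $F$ centred at $a\in X$ (and $\pi_i(x)\in\Ga_i\subseteq X$ is the base point). This $\tilde F$ is definable, being a finite sum of definable functions, and on $U$ it is a finite sum of products of the $C^p$ functions $\theta_i$ with polynomials whose coefficients are $C^p$ functions of $\pi_i(x)$; hence $\tilde F|_U$ is of class $C^p$. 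It remains to check that $\tilde F$ is of class $C^{m,\om}$ up to $X$ with the asserted norm bound, which is the classical Whitney estimate with the modulus carried through. Using the Whitney compatibility conditions for $F$, one shows that for $x\in U$ and $|\al|\le m$ the derivative $\p^\al\tilde F(x)$ differs from $\p^\al\big(T^m_{a}F\big)(x)$, for any nearby base point $a\in X$, by at most $C\,|F|_{X,m,\om}\,\on{dist}(x,X)^{m-|\al|}\om(\on{dist}(x,X))$, the derivative bounds on $\rho$, $\pi_i$ and $\theta_i$ being used to control the terms coming from differentiating the cutoffs; combined with a Taylor-with-integral-remainder comparison of the jet at two nearby points of $X$, this yields $\tilde F\in C^{m,\om}(\R^n)$ with $\|\tilde F\|_{C^{m,\om}(\R^n)}\le C(n,m,p,X)\,|F|_{X,m,\om}$ (in particular, each $\p^\al\tilde F$ with $|\al|=m$ extends continuously and $\om$-H\"older across $X$). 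Every step of the recipe is definable in $F$, so running it with $F=F_t$ produces the required definable bounded family.

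The main obstacle is the simultaneous presence of three constraints that the classical argument never faces together. Definability rules out the infinite cube decomposition, forcing one to produce the retractions $\pi_i$, the regularized distance $\rho$ and a \emph{finite} partition of unity within the o-minimal structure while still obtaining the scale-invariant bounds $|\p^\al\rho|\lesssim\on{dist}(\cdot,X)^{1-|\al|}$, $|\p^\al\theta_i|\lesssim\on{dist}(\cdot,X)^{-|\al|}$ that the Whitney estimate requires — this is where the cell-decomposition/rectilinearization input does the real work. Independently, one must extract genuine $C^p$ regularity off $X$ from data that is only $C^m$, which is why the local models are Taylor polynomials composed with the $C^p$ maps $\pi_i$ and why the partition of unity must be $C^p$ rather than merely $C^m$. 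Finally, all constants must be made to depend on $n$, $m$, $p$ and $X$ only, never on the jet, so that a uniformly bounded definable family of jets yields a uniformly bounded definable family of extensions; the most delicate point in the estimates is matching the power of $\rho$ to the modulus $\om$ so that the top-order derivatives $\p^\al\tilde F$, $|\al|=m$, come out genuinely $\om$-H\"older and not merely continuous.
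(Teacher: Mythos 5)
This theorem is cited from \cite{ParusinskiRainer:2023aa} and is not proved in the present paper; the paper only describes the method as using ``smooth cell decomposition and subtle inequalities for the derivatives of definable functions'' in place of the dyadic decomposition and smoothing operators. So there is no in-text proof against which to check your argument, only a pointer to the prior work.

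Your overall strategy --- replace the dyadic Whitney grid by finitely many definable pieces, build definable retractions $\pi_i$ onto $C^p$ strata of $X$, a regularized distance $\rho\approx\on{dist}(\cdot,X)$ with $|\p^\al\rho|\lesssim\on{dist}(\cdot,X)^{1-|\al|}$, and a finite definable $C^p$ partition of unity $\theta_i$ with $|\p^\al\theta_i|\lesssim\on{dist}(\cdot,X)^{-|\al|}$, and then use the Glaeser--Whitney formula $\sum_i\theta_i\,T^m_{\pi_i(\cdot)}F$ --- is indeed the standard route taken in the o-minimal Whitney extension literature (Kurdyka--Paw{\l}ucki, Thamrongthanyalak) that \cite{ParusinskiRainer:2023aa} builds on, and it correctly isolates where the $C^p$-regularity off $X$ and the definability of the extension come from. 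What you cannot take for granted, and what constitutes essentially the entire technical content of those papers, are the ``geometric'' lemmas you list as inputs: a finite definable partition of unity subordinate to a stratum-adapted cover with the $\Lambda$-regular blow-up bounds does not come for free from $C^p$ cell decomposition; one needs $\Lambda^p$-regular (or $L$-regular) decompositions and Gabrielov-type gradient inequalities to get both the $C^p$ smoothness off $X$ and the scale-sensitive derivative bounds simultaneously, with constants depending only on the decomposition. Likewise, the estimate $\|\pi_i(x)-x\|\lesssim\on{dist}(x,X)$ requires the cover $\{V_i\}$ to be adapted so that $\on{dist}(\cdot,\Ga_i)\approx\on{dist}(\cdot,X)$ on $V_i$, and the Whitney estimate across the seams of the cover requires comparing $T^m_{\pi_i(x)}F$ and $T^m_{\pi_j(x')}F$ for base points in \emph{different} strata, which is where Whitney-type regularity of the stratification and the jet compatibility conditions must be combined. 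You flag these as ``obstacles'' at the end, but they are the theorem: once the $\Lambda$-regular decomposition with retractions and partition of unity is in hand, the remainder is indeed the classical Whitney estimate carried through with the modulus $\omega$, and uniformity over the family is automatic because (as you observe) $X$, and hence the geometric apparatus, is parameter-independent.
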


Clearly, boundedness is understood with respect to the natural norms.
The main theorem of \cite{ParusinskiRainer:2023aa} is actually more general.

Let us prove \eqref{eq:altequiv} and the statement about the bounded subsets.
It is obvious that 
$C^{1,\om}_{\on{def}}(\R^n)|_X \subseteq   \R^X_{\on{def}} \cap C^{1,\om}(\R^n)|_X$ and
\[
    \|f\|_{C^{1,\om}(\R^n)|_X} \le \|f\|_{C^{1,\om}_{\on{def}}(\R^n)|_X}, \quad f \in C^{1,\om}_{\on{def}}(\R^n)|_X.
\]    
Conversely, suppose that $f \in \R^X_{\on{def}} \cap C^{1,\om}(\R^n)|_X$. 
Then \Cref{thm:BS13defjet} implies that there is a bounded definable function $g : X \to \R^n$ 
such that $(f,g)$ is a definable Whitney jet of class $C^{1,\om}$ on $X$
with
\[
    \|(f,g)\|_{X,1,\om} \le C(n) \, \|f\|_{C^{1,\om}(\R^n)|_X}.
\]
By \Cref{thm:defjetext}, we conclude that $f \in C^{1,\om}_{\on{def}}(\R^n)|_X$ 
and that a subset of $\R^X_{\on{def}} \cap C^{1,\om}(\R^n)|_X$ which is bounded in  
$C^{1,\om}(\R^n)|_X$ is also bounded in $C^{1,\om}_{\on{def}}(\R^n)|_X$.

Let us now specialize to the case $\om(t) = t$ (we write $\|\cdot\|_{X,1,1} := \|\cdot\|_{X,1,\om}$ 
and $|\cdot|_{X,1,1} := |\cdot|_{X,1,\om}$ in this case) and prove \eqref{eq:equivnorms}. 
To this end, consider: 

\begin{proposition} \label{cor:C11}
    Let $X \subseteq \R^n$ be a definable compact set and let $f : X \to \R$ be a bounded definable function.
    Assume that $\cM_X$ carries the metric $\rh_\om$ with $\om(t) = t$.
    Then the following conditions are equivalent:
    \begin{enumerate}  
        \item $f$ is the restriction of a definable function $F \in C^{1,1}(\R^n)$.
        \item There exists a definable function $g : X \to \R^n$ such that $(f,g)$ is a Whitney jet of class $C^{1,1}$ 
            on $X$.
    \end{enumerate}
    Moreover, if $g$ is as in \thetag{2}, then there exists an extension $F$ of $f$ such that
    \begin{align} \label{eq:normbound}
        \|F\|_{C^{1,1}(\R^n)}  
        \le C(n)\, \|(f,g)\|_{X,1,1}.
    \end{align}       
\end{proposition}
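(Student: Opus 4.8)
The plan is to prove \Cref{cor:C11} by invoking the explicit $C^{1,1}$-extension formula due to Azagra, Le~Gruyer, and Mudarra, which has the double virtue of being visibly definable and of admitting an explicit bound on the resulting norm. The equivalence of \thetag{1} and \thetag{2} is the content of the definable $C^{1,1}$ case already handled: \thetag{1}$\Rightarrow$\thetag{2} is immediate by taking $g := \nabla F|_X$, which is definable and Lipschitz on $X$ since $F\in C^{1,1}(\R^n)$. For the converse together with the norm estimate \eqref{eq:normbound}, I would start from a definable Whitney jet $(f,g)$ of class $C^{1,1}$ on the compact definable set $X$ and write down the Azagra--Le~Gruyer--Mudarra extension $F$ explicitly: it is built as a sup/inf (a Wells-type or Glaeser-type formula) of the affine functions $y\mapsto f(x) + \langle g(x), y-x\rangle$ corrected by quadratic terms with coefficient controlled by the jet seminorm $|(f,g)|_{X,1,1}$, followed by a truncation to control the sup-norm. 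Since $f$, $g$, $\|\cdot\|$, and the pointwise arithmetic/infimum operations are all definable, and $X$ is definable, $F$ is definable by o-minimality (definable choice is not even needed here, as the formula is explicit).

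The key steps, in order, would be: (i) recall the hypotheses of the Azagra--Le~Gruyer--Mudarra theorem — namely that $(f,g)$ being a Whitney jet of class $C^{1,1}$ on $X$ with finite seminorm $M := |(f,g)|_{X,1,1}$ is exactly the condition under which their formula produces an $F\in C^{1,1}(\R^n)$ with $F|_X = f$, $\nabla F|_X = g$, and $\mathrm{Lip}(\nabla F) \le C M$ for a dimensional constant $C(n)$; (ii) observe that the formula is a composition of definable operations applied to the definable data $f, g, X$, hence $F$ is definable; (iii) assemble the norm bound \eqref{eq:normbound}: the Lipschitz seminorm of $\nabla F$ is $\le C(n)\,|(f,g)|_{X,1,1}$ directly from the cited theorem, the sup-norm of $\nabla F$ on $\R^n$ is controlled by $\sup_X\|g\| + (\mathrm{diam}\,X)\cdot C(n) M$ — here compactness of $X$ is used — and the sup-norm of $F$ itself is then controlled by $\sup_X|f|$ plus $\mathrm{diam}\,X$ times $\sup\|\nabla F\|$; adding these three contributions and bounding $\mathrm{diam}\,X$ crudely (or absorbing it, noting that on a bounded set one may also first translate and the Lipschitz-in-gradient term dominates the relevant scale) gives $\|F\|_{C^{1,1}(\R^n)} \le C(n)\,\|(f,g)\|_{X,1,1}$.

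The main obstacle I anticipate is the bookkeeping in step (iii): the global sup-norms of $F$ and $\nabla F$ are not literally bounded by $\|(f,g)\|_{X,1,1}$ times a \emph{dimensional} constant unless one is careful, because a large diameter of $X$ could a priori inflate the gradient away from $X$. This is precisely where compactness of $X$ enters, and the clean way around it is either to apply a standard truncation (cutoff) to $F$ outside a large ball containing $X$ — which costs only a dimensional factor once one notes the diameter is already baked into $\|(f,g)\|_{X,1,1}$ via the sup-norm and Lipschitz terms being taken over $X$ — or to cite the version of the Azagra--Le~Gruyer--Mudarra result that already delivers the full $C^{1,1}(\R^n)$-norm bound rather than just the gradient-Lipschitz bound. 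I would take the latter route: quote their theorem in the form that yields \eqref{eq:normbound} directly with $C(n)$ dimensional, and spend the remaining lines only on the (routine) verification that every ingredient of their construction stays inside the fixed o-minimal structure. The definable Kirszbraun theorem, mentioned in the introduction as following from the same explicit formula, is an immediate byproduct of this same observation.
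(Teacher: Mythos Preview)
Your strategy matches the paper's: invoke the Azagra--Le~Gruyer--Mudarra formula, observe it is definable, then extract \eqref{eq:normbound}. The definability check is fine. The gap is in step~(iii), and neither of your proposed resolutions works as stated. There is no version of the cited theorem delivering the full $C^{1,1}(\R^n)$-norm, because whenever $M:=|(f,g)|_{X,1,1}>0$ the raw extension is never in $C^{1,1}(\R^n)$: since $\nabla F|_X=g$ and $\nabla F$ is $nM$-Lipschitz, $\|\nabla F(x)\|$ grows linearly in $\on{dist}(x,X)$, so $F$ is unbounded and a cutoff is mandatory. And cutting off outside a ball containing $X$ fails to give a dimensional constant, because the diameter is \emph{not} baked into $\|(f,g)\|_{X,1,1}$. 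Take $X=\{0,Re_1\}\subseteq\R^n$, $f\equiv0$, $g(0)=\ep e_1$, $g(Re_1)=-\ep e_1$; then $\|(f,g)\|_{X,1,1}=\ep+3\ep/R\approx\ep$ for large $R$, while from $\nabla F(0)=\ep e_1$ and $|\nabla F|_{\on{Lip}}\le 3n\ep/R$ one gets $F(te_1)\ge \ep t - \tfrac{3n\ep}{2R}t^2$, hence $|F|\ge \ep R/(6n)$ at $t=R/(3n)$. Any cutoff at scale comparable to $\on{diam}(X)$ therefore produces $\|\chi F\|_{C^{1,1}(\R^n)}\gtrsim \ep R$, not $\le C(n)\ep$.

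The paper's fix is to cut off in the fixed-width $1$-neighborhood of $X$ rather than in a ball containing $X$: take a definable $C^2$-function $\chi$ with $\chi=1$ near $X$ and $\chi=0$ outside $\{x:\on{dist}(x,X)<1\}$ (compactness of $X$ is used here to produce such a $\chi$ with $C^2$-norm depending only on $n$). On this neighborhood $\|\nabla F\|\le \sup_X\|g\|+nM$ and $|F|\le \sup_X|f|+C(n)(\sup_X\|g\|+M)$, so $\chi F\in C^{1,1}_{\on{def}}(\R^n)$ with $\|\chi F\|_{C^{1,1}(\R^n)}\le C(n)\|(f,g)\|_{X,1,1}$, which is \eqref{eq:normbound}. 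Replacing your ball by this $1$-neighborhood closes the gap.
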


\begin{proof}
    Let us recall a result of Azagra, Le Gruyer, and Mudarra \cite{Azagra:2018aa}:

    \emph{
        Let $(f,g)$ be a Whitney jet of class $C^{1,1}$ on a set $X \subseteq \R^n$ and let $M>0$ be such that 
        $|(f,g)|_{X,1,1}\le M$. 
        Then 
        \begin{align*}
            F &:= \on{conv}(h) - \frac{\sqrt n\, M}{2} \|\cdot \|_2^2, \quad{ where }
            \\
            h(x) &:= \inf_{y \in X} \Big( f(y) + \langle g(y), x-y \rangle + \frac{\sqrt n\, M}{2} \|x-y\|_2^2\Big) + \frac{\sqrt n\, M}{2} \|x\|_2^2, \quad x \in \R^n,
        \end{align*}
        and where $\on{conv}(h)$ is the convex envelope of $h$,
        defines a $C^1$-function $F : \R^n \to \R$ such that $F|_X = f$, $\nabla F|_X = g$, and $|\nabla F|_{\on{Lip}(\R^n,\R^n)} \le n\, M$. 
    }

    Note that the factor $\sqrt n$ appears, since we change from Euclidean to maximum norm. 
    The extension $F$ is optimal; cf.\ \cite[Theorem 3.4]{Azagra:2018aa}.
    The convex envelope $\on{conv}(h)$ 
    (i.e., the supremum of all convex, proper, l.s.c.\ functions $\vh \le h$) 
    can be expressed as
    \begin{equation*}
        \on{conv}(h)(x) = \inf \Big\{ \sum_{j=1}^{n+1} \la_j h(x_j) : 
        x = \sum_{j=1}^{n+1} \la_j x_j,\,   \sum_{j=1}^{n+1} \la_j =1,\, \la_j \ge 0 \Big\}.
    \end{equation*}
    or as $\on{conv}(h) = (h^*)^*$, where $h^*(x) := \sup_{y \in \R^n} \big(\langle y,x \rangle - h(y)\big)$ 
    is the convex conjugate.
    Now, if $X$ and $(f,g)$ are definable, we see that also the extension $F$ is definable 
    (because it is given by definable formulas). 
    After multiplication with a suitable definable $C^2$-cutoff function that equals $1$ in a small neighborhood of $X$
    and vanishes outside the $1$-neighborhood of $X$
    (here we use that $X$ is compact),
    we get that $F$ belongs to $C^{1,1}(\R^n)$.
    This shows that (2) implies (1). The opposite direction is clear.

    Now it is not hard to check \eqref{eq:normbound}. 
\end{proof} 

\begin{theorem} \label{thm:BS13defC11}
    Let $X \subseteq \R^n$ be a definable compact set and $f : X \to \R$ a definable function.
    Assume that
    the restriction $f|_Y$ to every subset $Y \subseteq X$ of cardinality $\# Y\le 3 \cdot 2^{n-1}$ has an 
    extension $F_Y\in C^{1,1}(\R^n)$ with $\|F_Y\|_{C^{1,1}(\R^n)} \le 1$.
    Then $f$ has a definable extension $F \in C^{1,1}(\R^n)$ such that 
    $\|F\|_{C^{1,1}(\R^n)} \le C(n)$. 
\end{theorem}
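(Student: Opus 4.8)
The plan is to combine \Cref{thm:BS13defjet} (the jet version) with \Cref{cor:C11} (the explicit Azagra--Le Gruyer--Mudarra extension formula), exactly as \eqref{eq:altequiv} was deduced from \Cref{thm:BS13defjet} and \Cref{thm:defjetext}, but now keeping track of norms. First I would observe that the finiteness hypothesis on $f$ is precisely the hypothesis of \Cref{thm:BS13defjet} with the definable modulus of continuity $\om(t) = t$ (which is semialgebraic, hence definable in any o-minimal expansion of the real field). Applying \Cref{thm:BS13defjet}, we obtain a bounded definable function $g : X \to \R^n$ such that $(f,g)$ is a definable Whitney jet of class $C^{1,1}$ on $X$ with $\|(f,g)\|_{X,1,1} \le C(n)$.

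Next I would feed this jet into \Cref{cor:C11}. Since $X$ is definable and compact and $(f,g)$ is definable, that proposition produces a definable extension $F \in C^{1,1}(\R^n)$ of $f$; moreover the norm bound \eqref{eq:normbound} gives $\|F\|_{C^{1,1}(\R^n)} \le C(n)\, \|(f,g)\|_{X,1,1}$. Chaining the two estimates yields $\|F\|_{C^{1,1}(\R^n)} \le C(n) \cdot C(n)\, = C(n)$, which is the assertion (after renaming the constant, which still depends only on $n$). This is essentially a two-line argument once the cited results are in place.

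The only point requiring a little care is bookkeeping of which hypotheses each cited statement needs. \Cref{thm:BS13defjet} requires $X$ closed and definable and $f$ definable, all of which hold here since compact sets are closed. \Cref{cor:C11} requires $X$ definable and compact, which is given, and a definable Whitney jet of class $C^{1,1}$, which is exactly the output of the previous step. So there is no real obstacle; the main (very mild) subtlety is simply to note that the constants in both \Cref{thm:BS13defjet} and \Cref{cor:C11} depend only on $n$, so their product does too, and that $\om(t)=t$ qualifies as a definable modulus of continuity so that \Cref{thm:BS13defjet} is applicable. I would write the proof as a short paragraph making these three invocations explicit.
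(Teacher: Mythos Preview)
Your proposal is correct and matches the paper's own proof, which is the one-line ``Combine \Cref{thm:BS13defjet} with \Cref{cor:C11}.'' Your additional bookkeeping (that $\om(t)=t$ is definable, that compact implies closed, and that the constants depend only on $n$) is exactly the verification implicit in that sentence.
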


\begin{proof}
    Combine \Cref{thm:BS13defjet} with \Cref{cor:C11}. 
\end{proof}

Now we may prove \eqref{eq:equivnorms}.
Let $F \in C^{1,1}(\R^n)$ be an extension of $f$. 
Then \Cref{thm:BS13defC11} implies that there is a definable extension $G \in C^{1,1}(\R^n)$ of $f$ 
satisfying $\|G\|_{C^{1,1}(\R^n)} \le C(n) \, \|F\|_{C^{1,1}(\R^n)}$. 
Then \eqref{eq:equivnorms} follows easily.

\subsection{Definable Kirszbraun theorem} \label{ssec:Kirszbraun}
As a consequence of the result of \cite{Azagra:2018aa} used in the proof of \Cref{cor:C11}, 
a version of Kirszbraun's theorem on the extension of Lipschitz functions by an explicit formula is given in \cite[Theorem 1.2]{Azagra:2020aa}:

\emph{
    Let $X \subseteq \R^n$ be any set and $f : X \to \R^m$ a Lipschitz map with Lipschitz constant $M$, 
    where $\R^n$ and $\R^m$ carry the standard inner product (i.e., the Lipschitz constant $M$ is computed 
    with respect to the Euclidean norms, whence no factor $\sqrt n$).
    Then 
    \begin{align*}
        F(x) &:= \nabla_{\R^m} \on{conv}(g)(x,0), \quad x \in \R^n, \quad \text{ where}
        \\
        g(x,y) &:= \inf_{z \in X} \big( \langle f(z), y\rangle + \frac{M}2 \|x-z\|^2_2 \big) + \frac{M}{2} \|x\|_2^2 + M \|y\|_2^2,\quad (x,y) \in \R^n \times \R^m,
    \end{align*}
    defines a Lipschitz extension $F : \R^n \to \R^m$ of $f$ with the same Lipschitz constant $M$.
}

This follows easily by applying the mentioned theorem of \cite{Azagra:2018aa}
to the $1$-jet $(0,(0,f))$ on $X \times \{0\} \subseteq \R^n \times \R^m$.
Actually, the result is valid for maps between Hilbert spaces.

As a corollary we get a \emph{definable Kirszbraun theorem} (since all formulas are definable):

\begin{theorem}
    If $f : X \to \R^m$, $X \subseteq \R^n$, is a definable Lipschitz map, 
    then the map $F : \R^n \to \R^m$ defined above is a definable Lipschitz extension of $f$ 
    preserving the Lipschitz constant.    
\end{theorem}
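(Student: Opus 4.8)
The plan is to invoke the explicit Kirszbraun-type formula quoted just above the statement, verify that its output is a definable map, and check that definability of $f$ propagates through every operation used to build $F$. First I would recall from the cited result of Azagra, Le Gruyer, and Mudarra that for an arbitrary Lipschitz map $f : X \to \R^m$ with Lipschitz constant $M$ (with respect to Euclidean norms), the auxiliary function
\[
    g(x,y) := \inf_{z \in X} \big( \langle f(z), y\rangle + \tfrac{M}2 \|x-z\|^2_2 \big) + \tfrac{M}{2} \|x\|_2^2 + M \|y\|_2^2
\]
is well-defined on $\R^n \times \R^m$, and that $F(x) := \nabla_{\R^m} \on{conv}(g)(x,0)$ is a Lipschitz extension of $f$ with the same constant $M$. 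This is exactly the theorem recalled in the preceding paragraph, so the only thing left to prove is the \emph{definability} of $F$.

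The key point is that each step in the construction preserves definability in the fixed o-minimal structure. Assume $f : X \to \R^m$ is definable, so its graph is a definable subset of $\R^n \times \R^m$; the Lipschitz constant $M$ can be taken to be any fixed rational (or the actual one, which is definable since it is the supremum of a definable family of quotients). The map $(x,y,z) \mapsto \langle f(z),y\rangle + \tfrac{M}{2}\|x-z\|_2^2$ is then definable on $\R^n \times \R^m \times X$; taking the infimum over $z \in X$ is a first-order operation (it defines a definable function by o-minimality, here finite because the expression is bounded below on $X$), so $g$ is definable on $\R^n \times \R^m$. Next, the convex envelope can be written via the Carathéodory formula as
\[
    \on{conv}(g)(p) = \inf \Big\{ \sum_{j=1}^{n+m+1} \la_j\, g(p_j) : p = \sum_j \la_j p_j,\ \sum_j \la_j = 1,\ \la_j \ge 0 \Big\},
\]
which again is a definable infimum over a definable set of tuples $(\la_j, p_j)$; hence $\on{conv}(g)$ is a definable function on $\R^n \times \R^m$. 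Finally $F(x) = \nabla_{\R^m}\on{conv}(g)(x,0)$: by o-minimal cell decomposition $\on{conv}(g)$ is $C^1$ on a definable dense open set and its gradient is definable there, and since the cited result guarantees that $\on{conv}(g)$ is actually $C^1$ everywhere in the relevant variables (being, up to the additive definable $C^2$ term, the convex envelope of a function whose associated jet extension is $C^1$), the map $x \mapsto \partial_y \on{conv}(g)(x,0)$ is a well-defined definable map $\R^n \to \R^m$. Thus $F$ is definable, it extends $f$, and it has Lipschitz constant $M$ by the Azagra--Le Gruyer--Mudarra theorem.

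The main obstacle — really the only subtlety — is making sure that the differentiation step genuinely yields a definable map defined on \emph{all} of $\R^n$, not merely on a definable open dense subset. The quoted theorem asserts that the formula defines a bona fide Lipschitz (in particular continuous, in particular everywhere-defined) extension, so this is not a gap in content; one just has to phrase it correctly, namely: $F$ is determined pointwise by a definable formula at every $x$, because the partial derivative in $y$ at $(x,0)$ exists for every $x$ (that is part of the conclusion of the cited result) and equals the limit of a definable difference quotient, hence is a definable function of $x$. All the other steps — infimum over a definable family, the Carathéodory description of the convex envelope, and arithmetic combinations — are standard o-minimal closure properties (finiteness of the relevant infima follows from the explicit lower bounds built into the formula, exactly as in the proof of \Cref{cor:C11}). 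No control of constants is needed here since the Lipschitz constant is preserved exactly, so there is nothing further to estimate.
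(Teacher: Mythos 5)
Your proof is correct and takes essentially the same approach as the paper, which dismisses the theorem with the one-line remark that ``all formulas are definable''; you have simply unpacked that remark, correctly tracing definability through the infimum, the Carath\'eodory description of the convex envelope, and the $y$-partial derivative at $(x,0)$ (realized as a limit of definable difference quotients, which is the right way to handle the only delicate step).
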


The definable Kirszbraun theorem was first proved by Aschenbrenner and Fischer \cite{Aschenbrenner:2010aa}, 
but the explicit formula gives it immediately.

\begin{remark}
    Actually, the definable Kirszbraun theorem \cite[Theorem A]{Aschenbrenner:2010aa} holds in 
    any expansion $\fR =(R,0,1,+,\cdot,< ,\ldots)$ of a real closed ordered field that is \emph{definably complete} (i.e., 
    each non-empty definable subset of $R$ which is bounded from above has a least upper bound in $R$). 
    Definable completeness (which follows from o-minimality) is a necessary condition for the validity of the 
definable Kirszbraun theorem;
    see \cite[Proposition 5.2]{Aschenbrenner:2010aa}.
    A careful inspection of the proofs of \cite[Theorem 3.4]{Azagra:2018aa} and \cite[Theorem 1.2]{Azagra:2020aa}
    shows that the explicit extension formula given above still holds and thus 
    gives a short alternative proof in this general setting.
\end{remark}

\subsection{Remarks on \eqref{eq:equivnormsom}} \label{ssec:furtherremarks}

The obstacle for obtaining \eqref{eq:equivnormsom} for all definable moduli of continuity $\om$
is that we do not know if a bound of the type \eqref{eq:normbound} generally holds.
But, by a result of Paw{\l}ucki \cite[Theorem 1.2]{Pawlucki08aa}, 
Whitney jets of class $C^{m,\om}$ (not necessarily definable) on a definable closed set $X \subseteq \R^n$ 
extend to $C^{m,\om}$-functions and the extension is by a continuous linear operator 
which is a finite composite of operators that either preserve definability or 
are defined by integration with respect to a parameter.

Integration may lead out of the o-minimal structure one started with.
For instance, if one starts with globally subanalytic sets and maps, due to a result of Lion and Rolin \cite{Lion:1998aa}, 
one lands in the algebra of real functions generated by globally subanalytic functions 
and their logarithms.
For a class containing globally subanalytic functions and their complex exponentials (thus oscillatory functions)
that is stable under parameterized integration, see \cite{Cluckers:2018aa}.
Generally, for every o-minimal expansion $\sS$ of the real field there is an o-minimal expansion $\widetilde \sS$
in which the solutions of Pfaffian equations with $\sS$-definable $C^1$-coefficients are 
$\widetilde \sS$-definable, 
by Speissegger \cite{Speissegger:1999aa}.
We conjecture that 
\eqref{eq:equivnormsom} at least holds if the trace norm on the left-hand side is 
computed with respect to the functions definable in $\widetilde \sS$ instead of 
those definable in the structure $\sS$ we started with.

In \cite{Pawlucki08aa}, integration with respect to a parameter is used 
for smoothing operators that are linear, continuous, and preserve moduli of continuity.
In the proof of the definable Whitney extension theorem of class $C^m$ 
\cite{Kurdyka:1997ab, Kurdyka:2014aa, Thamrongthanyalak:2017aa} as well as of class $C^{m,\om}$ 
\cite{ParusinskiRainer:2023aa}, 
smooth cell decomposition 
and subtle inequalities for the derivatives of definable functions are used instead of smoothing operators.

\subsection*{Acknowledgements}
This research was funded in whole or in part by the Austrian Science Fund (FWF) DOI 10.55776/P32905.
A large part of the work on this paper has been done at Mathematisches Forschungsinstitut Oberwolfach 
(\emph{Oberwolfach Research Fellows (OWRF) ID 2244p, October 31 - November 19, 2022}).
We are grateful for the support and the excellent working conditions.


\def\cprime{$'$}
\providecommand{\bysame}{\leavevmode\hbox to3em{\hrulefill}\thinspace}
\providecommand{\MR}{\relax\ifhmode\unskip\space\fi MR }
\providecommand{\MRhref}[2]{%
  \href{http://www.ams.org/mathscinet-getitem?mr=#1}{#2}
}
\providecommand{\href}[2]{#2}

\end{document}